\newcounter{numrellocal}
\renewcommand{\thenumrellocal}{\roman{numrellocal}}
\newcounter{numrelglobal}
\newcommand{\numrel}[2]{
  \stepcounter{numrellocal}
  \refstepcounter{numrelglobal}
  \ltx@label{#2}
  \overset{(\thenumrellocal)}{#1}
}
\let\oldaddcontentsline\addcontentsline
\newcommand{\starttocentries}{\let\addcontentsline\oldaddcontentsline}
\newtheorem{theorem}{Theorem}[section]
\newtheorem{lemma}[theorem]{Lemma}
\newtheorem{prop}[theorem]{Proposition}
\newtheorem{cor}[theorem]{Corollary}
\newtheorem*{cor*}{Corollary}
\newtheorem*{conjecture*}{Conjecture}
\newtheorem*{thm*}{Theorem}
\newtheorem*{lem*}{Lemma}
\newtheorem*{prop*}{Proposition}
\theoremstyle{definition}
\newtheorem{definition}[theorem]{Definition}
\newtheorem{example}[theorem]{Example}
\newtheorem*{defn*}{Definition}
\theoremstyle{remark}
\newcommand{\act}{\curvearrowright}
\DeclareMathOperator{\SL}{SL}
\def\l@subsection{\@tocline{2}{0pt}{1pc}{5pc}{}} \def\l@subsection{\@tocline{2}{0pt}{2pc}{6pc}{}} \makeatother
\title[Relative Commutants inside Tracial Crossed Products]{On relative commutants of subalgebras in group and tracial crossed product von Neumann algebras}
\author[Amrutam]{Tattwamasi Amrutam}
\address{Ben Gurion University of the Negev.
	Department of Mathematics.
	Be'er Sheva, 8410501, Israel.
}
\email{tattwamasiamrutam@gmail.com}
\author[Bassi]{Jacopo Bassi}
\email{jcpbassi123@gmail.com}
\date{\today}
\begin{document}
\begin{abstract} Let $\Gamma$ be a discrete group acting on a compact Hausdorff space $X$. Given $x\in X$, and $\mu\in\text{Prob}(X)$, we introduce the notion of contraction of $\mu$ towards $x$ with respect to unitary elements of a group von Neumann algebra not necessarily coming from group elements. Using this notion, we study relative commutants of subalgebras in tracial crossed product von Neumann algebras. The results are applied to negatively curved groups and $\SL(d,\mathbb{Z})$, $d \geq 2$.
\end{abstract}
\maketitle
\tableofcontents
\section{Introduction}
Operator algebras associated with discrete groups, or more generally discrete group actions, reveal essential properties of the underlying group. Probably the first evidence of this connection is that amenability has neat characterizations at the operator-algebraic level: injectivity of the group von Neumann algebra and nuclearity of the reduced group $C^*$-algebra. Nowadays, it is known that many other group properties have analog descriptions in terms of group $C^*$-algebras, for example, a-T-menability and property T (\cite{BrGu}). Also, free groups' full and reduced $C^*$-algebras can detect their order. It is a significant open problem raised by A. Connes whether non-isomorphic ICC property T groups have
non-isomorphic von Neumann algebras (see \cite{Chifan2023} for examples of ICC property (T) groups with non-isomorphic von Neumann algebras).

Dynamical systems represent a powerful tool for the study of rigidity properties of groups. As an example, rigidity results for certain discrete subgroups of $\SL(2,\mathbb{R})$ can be obtained by looking at the $C^*$-crossed products associated with certain actions (cf. \cite{Ba1, Ba2, GoLi}). Among the possible dynamical systems, an important role is played by boundary actions; for example, the topological amenability of the left action of a discrete group $\Gamma$ on its Stone-{\v C}ech boundary $\partial_\beta \Gamma$ is equivalent to exactness and topological amenability of the left-right action on the same space (which is usually referred to as bi-exactness or property $\mathcal{S}$) implies the Akemann-Ostrand (AO) property (\cite{AnDa}), (i.e., temperedness of the representation of $\Gamma \times \Gamma$ on the Calkin algebra of $l^2 (\Gamma))$, which ensures solidity (hence primeness) of the group von Neumann algebra. A significant open problem in the theory is deciding whether these three properties coincide (cf. \cite{Ba3}).

The solidity of the group von Neumann algebra is a very rigid property, which, for example, captures to some extent the dimension of the ambient group in the case of a lattice in a simple Lie group: it is automatic for discrete subgroups of simple rank-1 Lie groups, and it is automatically
denied by the existence of infinite subgroups with non-amenable centralizers. Weakenings of the (AO) property have been considered in the literature and lead to the definition of properly proximal groups (\cite{boutonnet2021properly}), for which some weaker rigidity properties hold as well. More recently, the notion of biexact von Neumann algebra was introduced in \cite{ding2023biexact}, where examples of von Neumann algebras that are solid but not biexact were given. However, it is still not known if there are non-biexact groups that give rise to solid von Neumann algebras.

 One of the most significant breakthroughs of recent years is the recognition of the central role of proximality arguments in the study of rigidity properties of discrete groups through the lens of dynamical systems, which lead, for example, to the identification of the Furstenberg boundary of a discrete group with the equivariant Hamana-injective envelope of the complex numbers (cf., for example, \cite{kalantar_kennedy_boundaries, Oz, BaRa1}).
 
Given a probability measure, $\mu\in \text{Prob}(X)$, proximality of $\mu$ is nothing but the contraction of this measure with a specific sequence of group elements. In addition, if these group elements leave every finite subset of the group $\Gamma$, then it can be shown that the corresponding unitary elements converge to zero weakly. We generalize this notion to the context of general unitary elements inside the group von Neumann algebra, which does not necessarily come from group elements.
\begin{definition} 
Let $\Gamma$ be a discrete countable group acting on a compact Hausdorff space $X$ endowed with a probability measure $\mu$. Let $(u_n)$ be a sequence of unitaries in $L\Gamma$. We say that $u_n$ is {\it $\mu$-contracting} (towards a point $x \in X$) if for every $\epsilon >0$, every $F \subset \Gamma$ finite and for every $f \in C(X)$ there is $N$ such that for every $n \geq N$ we have $\|u_n|_{A}\|_2 > 1-\epsilon$, where $A= \{ \gamma \in \Gamma \; | \; |\gamma^{-1} \eta \mu (f) - f(x)| < \epsilon\ \; \forall \eta \in F\}$.
    \end{definition}
If $u_n$'s come from the group elements, then the notion of $\mu$-contraction agrees with that of the $\Gamma$-contraction.
Motivated by the notion of solid von Neumann algebra, we employ proximality arguments to study the position of relative commutants of subalgebras in group (and more generally crossed product) von Neumann algebras.
 \begin{theorem}
     Let $\Gamma$ be a discrete countable group acting on a compact Hausdorff space $X$. Let $\{u_n\}\subset L(
    \Gamma)$ be a $\mu$-contracting sequence for the action on $X$ for some probability measure $\mu$ on $X$. Let $(\mathcal{N},\tilde{\tau})$ be a tracial von Neumann algebra, and $\Gamma\curvearrowright(\mathcal{N},\tilde{\tau})$ be a trace-preserving action. 
 Then $\{u_n:n\in\mathbb{N}\}' \cap (\mathcal{N}\rtimes\Gamma) \subset \mathcal{N}\rtimes\Gamma_x$,  where $x$ is determined by the fact that $(u_n)$ is $\mu$-contracting towards $x$, and $\Gamma_x=\{s\in\Gamma: sx=x\}$. 
 \end{theorem}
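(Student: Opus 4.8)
Here is the plan.

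\medskip

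\noindent\textbf{Setup and reduction.} I work with the Fourier decomposition over $\Gamma$: every $d\in\mathcal{M}:=\mathcal{N}\rtimes\Gamma$ is written $d=\sum_{\gamma\in\Gamma}d_\gamma u_\gamma$ with $d_\gamma:=E_{\mathcal{N}}(du_\gamma^*)\in\mathcal{N}$ and $\sum_\gamma\|d_\gamma\|_2^2=\|d\|_2^2$; in particular $u_n=\sum_\eta c^{(n)}_\eta u_\eta$ with $\sum_\eta|c^{(n)}_\eta|^2=1$, and the $\mu$-contracting hypothesis says exactly that the $\ell^2$-mass of $(c^{(n)}_\eta)_\eta$ concentrates, as $n\to\infty$, on the sets $A(\epsilon,F,f)$. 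Intersecting finitely many of these (with a harmless loss in the constant), I may work with sets $A=A(\epsilon,F,\mathcal{F})$ defined by a finite family $\mathcal{F}\subset C(X)$ of test functions and a finite $F\subset\Gamma$. Let $a\in\{u_n\}'\cap\mathcal{M}$ and write $a=c+b$ with $c:=E_x(a)$ and $b:=a-c$, where $E_x$ is the (trace-preserving) conditional expectation onto $\mathcal{N}\rtimes\Gamma_x$; thus $b$ is supported on $\Gamma\setminus\Gamma_x$. Since $\mathcal{N}\rtimes\Gamma_x=\{d\in\mathcal{M}:d_s=0\ \text{for all }s\notin\Gamma_x\}$, the goal is to show $a_s=0$ for every $s\in\Gamma\setminus\Gamma_x$. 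The commutation relation reads $a=u_nau_n^*$, hence for all $s,n$
\[
a_s=(u_ncu_n^*)_s+(u_nbu_n^*)_s ,
\]
and I will show that for $s\notin\Gamma_x$ both terms on the right tend to $0$; as the left side is independent of $n$, this forces $a_s=0$.

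\medskip

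\noindent\textbf{The core estimate.} This is the only place the hypothesis is used: \emph{for every $\gamma\in\Gamma\setminus\Gamma_x$ and every $t\in\Gamma$,}
\[
\sum_{\eta\in\Gamma}\bigl|c^{(n)}_\eta\bigr|\,\bigl|c^{(n)}_{t^{-1}\eta\gamma}\bigr|\ \longrightarrow\ 0\qquad(n\to\infty).
\]
Fix $f_0\in C(X)$ with $f_0(\gamma^{-1}x)\neq f_0(x)$ (possible since $\gamma\notin\Gamma_x$ forces $\gamma^{-1}x\neq x$), set $3\delta_0:=|f_0(\gamma^{-1}x)-f_0(x)|>0$, and for $0<\epsilon<\delta_0$ take $A:=A(\epsilon,F,\mathcal{F})$ with $\{e,t\}\subseteq F$ and $\{f_0,\ \gamma\cdot f_0\}\subseteq\mathcal{F}$ (here $(\gamma\cdot f_0)(z)=f_0(\gamma^{-1}z)$). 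Suppose $\eta\in A$ and $t^{-1}\eta\gamma\in A$. Applying the defining inequality of $A$ to the point $t^{-1}\eta\gamma$ with the identity and the function $f_0$ gives $|(\gamma^{-1}\eta^{-1}t)\mu(f_0)-f_0(x)|<\epsilon$; applying it to $\eta$ with the element $t$ and the function $\gamma\cdot f_0$, and using $(\eta^{-1}t)\mu(\gamma\cdot f_0)=(\gamma^{-1}\eta^{-1}t)\mu(f_0)$ and $(\gamma\cdot f_0)(x)=f_0(\gamma^{-1}x)$, gives $|(\gamma^{-1}\eta^{-1}t)\mu(f_0)-f_0(\gamma^{-1}x)|<\epsilon$. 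Hence $|f_0(\gamma^{-1}x)-f_0(x)|<2\epsilon<3\delta_0$, a contradiction. Therefore $t^{-1}A\gamma\cap A=\emptyset$. Splitting the sum over $\{\eta\in A\}$ and its complement, applying Cauchy--Schwarz, and using $\sum_{\zeta\notin A}|c^{(n)}_\zeta|^2<2\epsilon$ for $n$ large (from $\|u_n|_A\|_2>1-\epsilon$), the sum is eventually $\le 2\sqrt{2\epsilon}$; letting $n\to\infty$ and then $\epsilon\to0$ proves the estimate.

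\medskip

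\noindent\textbf{Assembling.} Expanding, $(u_ndu_n^*)_s=\sum_{\gamma}\sum_{\eta}c^{(n)}_\eta\overline{c^{(n)}_{s^{-1}\eta\gamma}}\,(\eta\cdot d_\gamma)$, so $\|(u_ndu_n^*)_s\|_2\le\sum_\gamma\|d_\gamma\|_2\sum_\eta|c^{(n)}_\eta|\,|c^{(n)}_{s^{-1}\eta\gamma}|$. Applied to $d=b$, whose Fourier support lies in $\Gamma\setminus\Gamma_x$: after truncating $b$ to a finite subset of its support (the error being controlled in $\|\cdot\|_2$ uniformly in $n$, since $\operatorname{Ad}(u_n)$ is $\|\cdot\|_2$-isometric) and applying the core estimate to each of the finitely many $\gamma$ involved, one gets $\|(u_nbu_n^*)_t\|_2\to0$ for every $t\in\Gamma$; in particular $(u_nbu_n^*)_s\to0$ for $s\notin\Gamma_x$, and $E_x(u_nbu_n^*)\to0$ weakly in $L^2(\mathcal{M})$ (its norm is $\le\|b\|_2$ and each Fourier coefficient, indexed by $t\in\Gamma_x$, tends to $0$). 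For the first term, apply $E_x$ to $c=E_x(a)=E_x(u_nau_n^*)$ to obtain $c=E_x(u_ncu_n^*)+E_x(u_nbu_n^*)$; since the second summand goes to $0$ weakly, $E_x(u_ncu_n^*)\to c$ weakly, while $\|E_x(u_ncu_n^*)\|_2\le\|u_ncu_n^*\|_2=\|c\|_2$. Weak lower semicontinuity of the norm then gives $\|E_x(u_ncu_n^*)\|_2\to\|c\|_2$, and weak convergence together with convergence of the norms yields $\|u_ncu_n^*-E_x(u_ncu_n^*)\|_2^2=\|c\|_2^2-\|E_x(u_ncu_n^*)\|_2^2\to0$, whence $\|(u_ncu_n^*)_s\|_2\to0$ for $s\notin\Gamma_x$. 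Substituting both limits into $a_s=(u_ncu_n^*)_s+(u_nbu_n^*)_s$ gives $a_s=0$ for every $s\in\Gamma\setminus\Gamma_x$, i.e. $a=c\in\mathcal{N}\rtimes\Gamma_x$.

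\medskip

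\noindent\textbf{Anticipated obstacle.} The delicate point is the core estimate: one must make exactly the right choice of which group element goes into $F$ and which translate of $f_0$ goes into $\mathcal{F}$ so that the disjointness $t^{-1}A\gamma\cap A=\emptyset$ comes out — this is precisely why a single test function and a single finite $F$ in the definition of $\mu$-contracting are enough. A secondary technical point is the convergence bookkeeping of the last step: one must \emph{not} claim $\|\cdot\|_2$-convergence $E_x(u_nbu_n^*)\to0$ (which would demand a uniform tail estimate over the infinite set $\Gamma_x$) but only weak convergence, and then extract the needed strong convergence of $E_x(u_ncu_n^*)$ from the norm squeeze.
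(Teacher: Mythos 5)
Your argument is correct, and it takes a genuinely different route from the paper. The paper proves the theorem by first building an ``almost-hypertrace'': it takes a weak$^*$ cluster point $\psi$ of $\tau\circ\mathrm{ad}(u_n)$ on $\mathbb{B}(L^2(\mathcal{N},\tilde\tau)\overline{\otimes}\ell^2\Gamma)$, uses the SOT-convergence $u_nP_\mu(f)u_n^*\to f(x)$ to place $P_\mu(C(X))$ in the multiplicative domain of $\psi$ with $\psi|_{P_\mu(C(X))}=\delta_x$, shows $\psi(aP_\mu(f)b)=\psi(P_\mu(f)ab)$ for $a\in\{u_n\}'$, and then runs a Urysohn-function/Cauchy--Schwarz argument to get $\|\mathbb{E}_x(u)\|_2=1$ for every unitary $u$ in the relative commutant. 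You bypass the Poisson transform, the auxiliary state and the multiplicative-domain machinery entirely and work directly with Fourier coefficients in $L^2(\mathcal{M})$: your key geometric input is the disjointness $t^{-1}A\gamma\cap A=\emptyset$ for $\gamma\notin\Gamma_x$, obtained by feeding both $f_0$ and its translate $\gamma\cdot f_0$ (and both $e$ and $t$) into the definition of the contracting sets --- this plays exactly the role that the Urysohn function separating $x$ from $sx$ plays in the paper. The endgame is also different but sound: you correctly settle for weak $L^2$-convergence $\mathbb{E}_x(u_nbu_n^*)\to 0$ (coefficientwise decay plus uniform boundedness) and recover strong convergence for the $\mathbb{E}_x(u_ncu_n^*)$ piece from the Pythagorean norm squeeze, whereas the paper never needs this because it only tracks the single scalar $\psi(ufu_0)$. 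What each approach buys: the paper's intermediate Proposition (the state $\psi$) is reusable and aligns with the boundary-rigidity/hypertrace formalism used elsewhere in the paper, while your computation is more elementary and self-contained, applies at once to arbitrary elements of the relative commutant rather than going through unitaries, and makes the role of the contraction hypothesis completely explicit. Two bookkeeping points you should spell out in a final write-up: the justification that intersecting finitely many of the sets $A(\epsilon,F,f)$ only degrades the mass estimate to $1-C\epsilon$, and the interchange of the (a priori only $\|\cdot\|_2$-convergent) double Fourier sum in $(u_ndu_n^*)_s$, which your truncation of $b$ handles but which deserves a sentence.
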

 
 \subsection{Organization of the paper} Apart from this section, there are four other sections. We prove some preliminary technicalities in Section~\ref{sec:prelim}. In Section \ref{tvna}, using a suitable notion of convergence of a measure to a point under a sequence of unitaries, inspired by \cite{boutonnet2021properly}, we show that the commutant of certain subalgebras of tracial crossed product algebras is contained in the von Neumann algebra associated to the stabilizer of the limiting point. In Section \ref{ncv}, we consider the case of "negatively curved groups" and show that in this case, the position of the relative commutants of subgroup algebras is reminiscent of an averaging property, in the spirit of the Powers' averaging property (cf. for example \cite{haagerup2016new, amrutam2022generalized, Ro}). In Section \ref{sl}, we consider the case of infinite subgroups of $\SL(d,\mathbb{Z})$, in which case we prove that the position of the relative commutant of a subgroup algebra depends on the dynamics of the subgroup in a particular partial flag. Per the results appearing in \cite{BaRa3}, we also see that a weak form of solidity holds for $\SL(3,\mathbb{Z})$.
 
 The authors believe that the techniques developed in this manuscript should have a deep connection with the approach appearing in \cite{BeKa} and \cite{BaRa2}. This connection will be investigated in future work.
\subsection{Acknowledgement} The first named author thanks Yair Glasner and Ido Grayevsky for many helpful discussions around hyperbolic groups. The authors thank Yongle Jiang for reading through a near complete draft of this paper and for his numerous suggestions and corrections. We thank the anonymous referees for their comments and suggestions which improved the readability of the paper. The authors are also supported by research funding from the European Research Council
(ERC) under the European Union's Seventh Framework Program
(FP7-2007-2013) (Grant agreement No. 101078193). The second named author acknowledges the support of INdAM-GNAMPA and the grant "Operator Algebras and Quantum Mathematics (OAQM)," CUP: E83C22001800005.
\section{Preliminaries and Technicalities}
\label{sec:prelim}
Let $\Gamma$ be a discrete group. By a $\Gamma$-space $X$, (also denoted as $\Gamma \act X$ sometimes) on a compact Hausdorff space $X$, we mean a group homomorphism $\pi: \Gamma \to \text{Homeo}(X)$. We often abuse the notation by ignoring $\pi$ and write $s x$ instead of $\pi(s) x$ for $s\in \Gamma$ and $x\in X$.
\begin{definition}
\label{def:contractionforgroupelements}
Let $\Gamma$ be a discrete countable group acting on a second countable compact Hausdorff space $X$ endowed with a probability measure $\mu$. A sequence $(\gamma_n) \subset \Gamma$ is said to be a {\it $\mu$-pointwise contracting sequence} if there is $x \in X$ such that $\gamma_n y \rightarrow x$ for $\mu$-almost every $y \in X$. In this case we say that $(\gamma_n)$ is $\mu$-pointwise-contracting towards $x$.
\end{definition}
If $\mu$ is point-wise contracted by the sequence $\{\gamma_n\}$, then it is also pointwise contracted by $\{\gamma_n\gamma\}$ for any group element $\gamma\in\Gamma$. Moreover, the point where it converges is also unchanged. In other words, the contracting sequence is invariant with respect to the right multiplication by the group elements. We make this precise below.
\begin{lemma}\label{lem1}
    Let $\Gamma$ be a discrete countable group acting on a second countable compact space $X$ and $\mu$ a $\Gamma$-quasi invariant probability measure on $X$. Suppose that there is a $\mu$-pointwise-contracting sequence $(\gamma_n) \subset \Gamma$ towards a point $x \in X$. Then for every $\gamma \in \Gamma$ we have that $\lim_n \gamma_n \gamma \mu =\delta_x$ exists and is independent of $\gamma$.
\end{lemma}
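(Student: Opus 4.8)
The plan is to reduce the statement to the case $\gamma = e$ and then invoke the definition of pointwise contraction together with dominated convergence. First I would record the translation-invariance of the contraction property: by hypothesis the set $N = \{ y \in X : \gamma_n y \not\to x \}$ is $\mu$-null, and since $\mu$ is $\Gamma$-quasi-invariant the set $\gamma^{-1} N$ is $\mu$-null as well for every $\gamma \in \Gamma$. For $y \notin \gamma^{-1} N$ we have $\gamma y \notin N$, hence $\gamma_n(\gamma y) \to x$. Thus $(\gamma_n \gamma)$ is again a $\mu$-pointwise-contracting sequence, and it contracts towards the \emph{same} point $x$; this is precisely the assertion that the limit, once shown to exist, is independent of $\gamma$.

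For the existence and identification of the limit, fix $\gamma \in \Gamma$ and let $f \in C(X)$. Writing the pushforward explicitly, $\int_X f \, d(\gamma_n \gamma \mu) = \int_X f(\gamma_n \gamma y) \, d\mu(y)$. By the previous paragraph the integrand converges $\mu$-almost everywhere to the constant $f(x)$, and it is dominated by $\|f\|_\infty \in L^1(\mu)$; the dominated convergence theorem then gives $\int_X f \, d(\gamma_n \gamma \mu) \to f(x) = \int_X f \, d\delta_x$. Since $X$ is second countable and compact, weak-$*$ convergence of probability measures is tested against $C(X)$, so this shows $\gamma_n \gamma \mu \to \delta_x$ in $\mathrm{Prob}(X)$, which is the claim.

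I do not expect a serious obstacle here. The only point that needs care is the transport of the almost-everywhere convergence through the change of variables $y \mapsto \gamma y$, where quasi-invariance of $\mu$ is used in an essential way (absent it, the pushforwards $\gamma_n\gamma\mu$ need not even converge to a point mass). One could compress the whole argument into a single application of dominated convergence without the preliminary reduction, but isolating the translation-invariance of the contraction property is worthwhile since that is exactly the mechanism forcing the limit to be independent of $\gamma$.
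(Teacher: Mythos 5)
Your proposal is correct and follows essentially the same route as the paper: both arguments transport the $\mu$-null exceptional set through $y \mapsto \gamma y$ using quasi-invariance of $\mu$, and then apply the dominated convergence theorem to conclude weak-$*$ convergence of $\gamma_n\gamma\mu$ to $\delta_x$. No gaps; your explicit remark that quasi-invariance is the mechanism making the limit independent of $\gamma$ matches the paper's reasoning exactly.
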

\begin{proof}
    There is $x \in X$ such that for $\mu$-almost every $y \in X$ we have $\lim_n \gamma_n y \rightarrow x$. Let $E \subset X$ be the subset of $\mu$-measure $0$ such that $\lim_n \gamma_n y = x$ for every $y \notin E$. Since $\mu$ is quasi invariant we have that for every $\gamma \in \Gamma$ there is a subset $E'$ of measure zero, namely $\gamma^{-1} E$, such that $\lim_n \gamma_n \gamma y = x$ for every $y \notin E'$. Hence, for every $f \in C(X)$ and every $\gamma \in \Gamma$ we have $f(\gamma_n \gamma y) \rightarrow f(x)$ $\mu$-almost everywhere. It follows from Lebesgue dominated convergence Theorem that $\lim_n \gamma_n \gamma \mu = \lim_n \gamma_n \mu$ in the weak$^*$-topology.
\end{proof}

\subsection{Group von Neumann algebra}
We briefly recall the construction of the group von Neumann algebra. Let $\ell^2(\Gamma)$ be the space of square summable $\mathbb{C}$-valued functions on $\Gamma$. There is a natural action $\Gamma\curvearrowright \ell^2(\Gamma)$ by left translation:
\[\lambda_g\xi(h):=\xi(g^{-1}h), \xi \in \ell^2(\Gamma), g,h \in \Gamma\]
The group von Neumann algebra $L(\Gamma)$ is generated (as a von Neumann algebra inside $\mathbb{B}(\ell^2(\Gamma)$), by the left regular representation $\lambda$ of $\Gamma$. The group von Neumann algebra $L(\Gamma)$ comes equipped with a canonical trace $\tau_0:L(\Gamma)\to\mathbb{C}$ defined by 
\[\tau_0\left(\lambda_g\right)=\left\{ \begin{array}{ll}
0 & \mbox{if $g\ne e$}\\
1 & \mbox{if $g=e$}\end{array}
\right\}\]
It is worth noting that a natural embedding of $L(\Gamma)$ into $\ell^2(\Gamma)$ exists via the map $x\mapsto x\delta_e$. Therefore, any element $x\in L(\Gamma)$ can be expressed as $x=\sum_{g\in \Gamma}x_g\lambda(g)$, where $\lambda(g)\in L(\Gamma)$ correspond to the canonical unitaries of $L(\Gamma)$ and $x_g=\tau_0(x\lambda(g)^*)$ are the Fourier coefficients of $x$. The above sum converges in $\ell^2$-norm ($\|\cdot\|_2$) and not with respect to the strong operator or weak operator topology, as mentioned in \cite[Remark 1.3.7]{ADP}. This expansion is commonly referred to as the Fourier expansion of $x$.

We will make use of the following notion, which generalizes Definition \ref{def:contractionforgroupelements} to sequences of unitaries in a group von Neumann algebra, which do not come from group elements.
\begin{definition} 
\label{def:contraction}
Let $\Gamma$ be a discrete countable group acting on a compact Hausdorff space $X$ endowed with a probability measure $\mu$. Let $(u_n)$ be a sequence of unitaries in $L\Gamma$. We say that $u_n$ is {\it $\mu$-contracting} (towards a point $x \in X$) if for every $\epsilon >0$, every $F \subset \Gamma$ finite and for every $f \in C(X)$ there is $N$ such that for every $n \geq N$ we have $\|u_n|_{A}\|_2 > 1-\epsilon$, where $A= \{ \gamma \in \Gamma \; | \; |\gamma^{-1} \eta \mu (f) - f(x)| < \epsilon\ \; \forall \eta \in F\}$.
    \end{definition}

\begin{lemma} \label{lemb}
    Let $\Gamma$ be a discrete countable group acting on a second countable compact space $X$ and $\mu$ a $\Gamma$-quasi invariant probability measure on $X$. Let $\Lambda \subset \Gamma$ be a subgroup with the property that there is a point $x \in X$ such that every diverging sequence $\lambda_n$ in $\Lambda$ is $\mu$-pointwise-contracting towards $x$. Then every sequence of unitaries in $L\Lambda$ which goes to zero weakly is $\mu$-contracting towards $x$.
\end{lemma}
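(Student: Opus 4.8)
My plan is to reduce everything to a single combinatorial statement: for every $\epsilon>0$, every finite $F\subset\Gamma$ and every $f\in C(X)$, the set $\Lambda\setminus A$ is \emph{finite}, where $A=\{\gamma\in\Gamma: |\gamma^{-1}\eta\mu(f)-f(x)|<\epsilon\ \text{ for all }\eta\in F\}$ as in Definition \ref{def:contraction}. Once this is known, the lemma is immediate. Indeed, if $u_n$ is a unitary in $L\Lambda$ then $\|u_n\|_2=1$ and its Fourier expansion $u_n=\sum_{\gamma\in\Lambda}(u_n)_\gamma\lambda_\gamma$ is supported on $\Lambda$, so
\[
\|u_n|_A\|_2^2=\sum_{\gamma\in A\cap\Lambda}|(u_n)_\gamma|^2=1-\sum_{\gamma\in\Lambda\setminus A}|(u_n)_\gamma|^2.
\]
Since $\Lambda\setminus A$ is finite and each Fourier coefficient $(u_n)_\gamma=\tau_0(u_n\lambda_\gamma^*)$ is a matrix coefficient of $u_n$, hence tends to $0$ as $u_n\to 0$ weakly, the right-most sum tends to $0$; therefore $\|u_n|_A\|_2\to 1$, and in particular $\|u_n|_A\|_2>1-\epsilon$ for all large $n$, which is exactly what Definition \ref{def:contraction} requires.

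To prove the finiteness claim I would argue by contradiction. Assuming $\Lambda\setminus A$ is infinite, choose a sequence $(\lambda_k)$ of pairwise distinct elements of $\Lambda\setminus A$; for each $k$ there is, by definition of $A$, an $\eta_k\in F$ with $|\lambda_k^{-1}\eta_k\mu(f)-f(x)|\ge\epsilon$, and since $F$ is finite we may pass to a subsequence along which $\eta_k\equiv\eta$ is constant. Now $(\lambda_k)$ consists of distinct elements of $\Lambda$, so it leaves every finite subset of $\Gamma$; consequently $(\lambda_k^{-1})$ is again a diverging sequence \emph{inside the subgroup} $\Lambda$, and the standing hypothesis makes it $\mu$-pointwise-contracting towards $x$. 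Applying Lemma \ref{lem1} to the sequence $(\lambda_k^{-1})$ and the group element $\eta$ (this is where $\Gamma$-quasi-invariance of $\mu$ is used) yields $\lambda_k^{-1}\eta\mu\to\delta_x$ in the weak$^*$-topology, i.e.\ $\lambda_k^{-1}\eta\mu(f)\to f(x)$, contradicting $|\lambda_k^{-1}\eta\mu(f)-f(x)|\ge\epsilon$.

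I expect the only real subtlety to be the bookkeeping in the second paragraph — namely, that it is the termwise-inverted sequence $(\lambda_k^{-1})$, not $(\lambda_k)$ itself, that must be plugged into Lemma \ref{lem1}, which is legitimate precisely because $\Lambda$ is a subgroup, and that one should first pigeonhole the finitely many witnesses $\eta_k\in F$ down to a single $\eta$ before invoking the contraction hypothesis. The remaining ingredients — weak convergence of $u_n$ forcing each Fourier coefficient to $0$, and the identity $\|u_n|_A\|_2^2=1-\sum_{\gamma\in\Lambda\setminus A}|(u_n)_\gamma|^2$ — are routine.
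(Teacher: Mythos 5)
Your proof is correct and follows essentially the same route as the paper's: establish that the complement of $A$ in $\Lambda$ is finite via the pointwise-contraction hypothesis and Lemma \ref{lem1}, then use weak convergence of $u_n$ to kill the finitely many Fourier coefficients outside $A$. The paper's version is terser and leaves implicit the two bookkeeping points you flag (pigeonholing the witnesses $\eta_k\in F$ and passing to the inverted sequence $(\lambda_k^{-1})$, which stays in $\Lambda$ because $\Lambda$ is a subgroup), so your write-up is if anything a more complete account of the same argument.
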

\begin{proof}
    Let $\epsilon >0$. Since every diverging sequence in $\Lambda$ is $\mu$-contracting towards $x$, it follows from Lemma \ref{lem1} that for every finite set $G \subset \Gamma$ and every $f \in C(X)$ there is a finite set $F \subset \Lambda$ satisfying $|\lambda^{-1} \gamma \mu (f) - f(x) |< \epsilon$ for every $\lambda \in \Lambda \backslash F$, $\gamma \in G$. Now, since $u_n \rightarrow 0$ weakly, there is $N \in \mathbb{N}$ such that $\| u_n|_F \|_2 < \epsilon$ for every $n >N$. The result follows.
\end{proof}
A group $\Gamma$ is called a convergence group if it admits an action $\Gamma\curvearrowright X$ such that for every distinct sequence of elements $\{g_n\}\subset\Gamma$, we can find two elements $a,b\in X$ and a sub-sequence $g_{n_k}$ such that $g_{n_k}|_{X\setminus\{b\}}\to a$ uniformly on every compact subset of $X\setminus \{b\}$ (see \cite{Tukia}). In this case, $a$ is called the attracting point, and $b$ is the repelling point. An element $s$ in a convergence group $\Gamma$ is called parabolic if $s$ has exactly one fixed point on $X$. Moreover, an element $s$ in a convergence group $\Gamma$ is called loxodromic if it has exactly two fixed points on $X$, denoted by $x_s^+$ and $x_s^-$. Moreover, $x_s^+$ is the attractive point for $s$, and $x_s^-$, the repelling point (see \cite[Lemma~2D]{Tukia}).
\begin{example}
\label{ex:convggrp}
Let $\Gamma$ be a convergence group, and $s\in\Gamma$ a parabolic element. Let us call it $x_s^+$. Let $\Lambda=\langle s \rangle$. Using \cite[Lemma~2F]{Tukia}, we see that $\{s^n\}_{n\in\mathbb{Z}}$ is a convergence sequence with the attractive and repelling point of $\{s^n\}_{n\in\mathbb{Z}}$ the same as that of $\{x_s^+\}$. Let us assume that $\Gamma$ is non-elementary, i.e., the set of limit points $LX$ (the collection of all attracting points on $X$) has more than two points. Using \cite[Theorem~2S]{Tukia}, we see that $LX$ is an infinite perfect set. Let $\mu$ be a $\Gamma$-quasi invariant probability measure on $X$ such that $\mu(x_s^+)\ne 0$. It follows from definition~\ref{def:contractionforgroupelements} that every diverging sequence $\lambda_n$ in $\Lambda$ is $\mu$-pointwise-contracting towards $x_s^+$. Let $\mathcal{M}\le L(\Lambda)$ be a diffuse von Neumann subalgebra. Let $u_n\in\mathcal{M}$ be a sequence of unitaries in $\mathcal{M}$ which converges to $0$ weakly. It follows from Lemma~\ref{lemb} that $u_n$ is $\mu$-contracting towards $x_s^+$. Similarly, if $s \in \Gamma$ is loxodromic, then there is a quasi-invariant probability measure $\mu$ on $X$ and points $x_s^{+}$, $x_s^{-}$ such that the sequence $(s^n)$ is $\mu$-pointwise-contracting towards $x_s^{+}$ and $(s^{-n})$ is $\mu$-pointwise-contracting towards $x_s^{-1}$.
\end{example}
Given a $\Gamma$-space $X$, and $\mu\in\text{Prob}(X)$, the Poisson transformation $P_{\mu}:C(X)\to\mathbb{B}(\ell^2(\Gamma))$ is defined by $P_{\mu}(f)(\delta_t)=\mu(t^{-1}f)\delta_t, t\in\Gamma$. It is well-known that $P_{\mu}$ is a $\Gamma$-equivariant unital positive map. Whenever $\mu$ can be contracted using the unitaries, $P_{\mu}$ satisfies a kind of singularity phenomenon.   
\begin{lemma}
\thlabel{SOTconvergence}
Let $\Gamma$ be a discrete countable group acting on a compact Hausdorff space $X$. Let $\mu\in\text{Prob}(X)$. Let $\{u_n\}_n$ be a sequence of unitaries in $L(\Gamma)$ be such that $u_n\to 0$ weakly and is $\mu$-contracting towards $x$. Then, $u_nP_{\mu}(f)u_n^*\xrightarrow[]{\text{SOT}}f(x)$ for every $f\in C(X)$.
\begin{proof}
Let $\{u_n\}_n\in L(\Gamma)$ be a sequence of unitaries satisfying the above assumptions. Let us write $u_n=\sum_{t\in\Gamma}u_n(t)\lambda(t)$, where $u_n(t)=\tau_0(u_n\lambda(t)^*)$ for each $t\in \Gamma$. Moreover, the convergence of the above series is in the $\|\cdot\|_2$-norm induced by the canonical trace $\tau_0$. Moreover, $$u_n^*=\sum_{t\in\Gamma}\overline{u_n(t)}\lambda(t^{-1})$$ 
Let $f \in C(X)$ and $\xi \in l^2 (\Gamma)$ be given. Choose $\epsilon >0$. Let $M=\max\{\sup_{t\in\Gamma}|\xi(t)|,1\}$. Choose $\epsilon'$ such that $2 \epsilon' M\|f\|_\infty < \epsilon /2$.  Let $F$ be a finite subset of $\Gamma$ such that
\[\sum_{t\not\in F}|\xi(t)|^2<(\epsilon')^2.\]
Choose also $\epsilon''>0$ such that $$(\epsilon'' 2\|f\|_\infty + \epsilon'') M|F| < \frac{\epsilon}{2}.$$ Let $A=\{s\in\Gamma: s^{-1}t\mu(f)-f(x)|<\epsilon ~\forall t\in F\}$. Since $u_n$ is $\mu$-contracting towards $x\in X$, we can find a $n_0\in\mathbb{N}$ such that for all $n\ge n_0$, we have
\[\left\|u_n|_A\right\|_2>1-\epsilon''.\]
We now see that 
\begin{align*}
&\left\|\sum_{t\in\Gamma}\left(u_n P_\mu (f) u_n^* \delta_t - f(x)\right) \xi(t)\delta_t\right\|_2\\&\numrel{\le}{rel1} \left\|\sum_{t\in F}\left(u_n P_\mu (f) u_n^* \delta_t - f(x)\right) \xi(t)\delta_t\right\|_2+\left\|\sum_{t\not\in F}\left(u_n P_\mu (f) u_n^* \delta_t - f(x)\right) \xi(t)\delta_t\right\|_2     
\end{align*}
Let us observe that
\[\left\|\sum_{t\not\in F}\left(u_n P_\mu (f) u_n^* \delta_t - f(x)\right) \xi(t)\delta_t\right\|_2\numrel{\le}{rel2} 2\|f\|_{\infty} \sqrt{\sum_{t\not\in F}|\xi(t)|^2}.\]
On the other hand,
\begin{align*}
&\left\|\sum_{t\in F}\left(u_n P_\mu (f) u_n^* - f(x)\right) \xi(t)\delta_t\right\|_2\\&\le \sum_{t\in F}\left\|\left(u_n P_\mu (f) u_n^* - f(x)\right) \xi(t)\delta_t\right\|_2 \\&=\sum_{t \in F} \|(P_\mu (f) u_n^*  - f(x) u_n^*) \xi(t) \delta_t\|_2\\&\le \sum_{t\in F}\left(\left\| \sum_{s \notin A} \overline{u_n(s)} (P_\mu (f) - f(x)) \lambda(s^{-1})\xi(t) \delta_t\right\|_2 +  \left\|\sum_{s \in A}  \overline{u_n(s)} (P_\mu (f) - f(x)) \lambda(s^{-1}) \xi(t)\delta_t\right\|_2\right)\\&=\sum_{t\in F}\left(\left\| \sum_{s \notin A} \overline{u_n(s)} (P_\mu (f) - f(x))\xi(t) \delta_{s^{-1}t}\right\|_2 +  \left\|\sum_{s \in A}  \overline{u_n(s)} (P_\mu (f) - f(x)) \xi(t)\delta_{s^{-1}t}\right\|_2\right)\\&=\sum_{t\in F}\left(\left\| \sum_{s \notin A} \overline{u_n(s)} (s^{-1}t\mu (f) - f(x))\xi(t) \delta_{s^{-1}t}\right\|_2 +  \left\|\sum_{s \in A}  \overline{u_n(s)} (s^{-1}t\mu (f) - f(x)) \xi(t)\delta_{s^{-1}t}\right\|_2\right)\\&\numrel{\le}{rel3} \sum_{t\in F}\left(2\|f\|_{\infty}\sqrt{ \sum_{tu^{-1} \notin A} \left|\overline{u_n(tu^{-1})}\right|^2 |\xi(t)|^2} +  \sqrt{\sum_{tu^{-1} \in A}\left| \overline{u_n(tu^{-1})}\right|^2 \left|(u\mu (f) - f(x))\right|^2 \left|\xi(t)\right|^2}\right)
\end{align*}
Since for all $t\in F$, $|(u\mu (f) - f(x))|<\epsilon$ for every $u$ with $tu^{-1}\in A$ and $\sum_{s\not\in A}|u_n(s)|^2<\epsilon''$, the inequality~(\ref{rel3}) becomes less than or equal to 
\begin{align*}
&\sum_{t\in F}\left( 2 \|f\|_\infty M\epsilon'' + \epsilon\sqrt{\sum_{tu^{-1} \in A}\left| \overline{u_n(tu^{-1})}\right|^2\left|\xi(t)\right|^2}\right)\\
    &\le \sum_{t\in F}\left( 2 \|f\|_\infty M\epsilon'' + M\epsilon\sqrt{\sum_{tu^{-1} \in A}\left| \overline{u_n(tu^{-1})}\right|^2}\right)\\&\numrel{\leq}{rel4} |F|M\left( 2 \|f\|_\infty \epsilon'' + \epsilon''\right).
    \end{align*}
Hence for every $n\ge n_0$, combining the inequalities (\ref{rel1}), (\ref{rel2}) and (\ref{rel4}), we obtain that
\begin{equation*}
        \|u_n P_\mu (f) u_n^* \xi - f(x) \xi\|_2 \leq 2\|f\|_{\infty}\sqrt{\sum_{t\not\in F}|\xi(t)|^2} + |F|M (2 \|f\|_\infty\epsilon'' + \epsilon'') <\epsilon.
    \end{equation*}
The claim follows.    
\end{proof}
\end{lemma}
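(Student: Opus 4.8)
The plan is to verify the definition of strong operator convergence directly. Fix $f\in C(X)$, a vector $\xi=\sum_{t}\xi(t)\delta_t\in\ell^2(\Gamma)$ and a tolerance $\epsilon>0$, and estimate $\|u_nP_\mu(f)u_n^*\xi-f(x)\xi\|_2$ for all large $n$. Since $P_\mu$ is unital and positive, $\|P_\mu(f)\|\le\|f\|_\infty$, and as each $u_n$ is unitary the operators $u_nP_\mu(f)u_n^*-f(x)$ are uniformly bounded in norm by $2\|f\|_\infty$. First I would use this uniform bound to discard the tail of $\xi$: choosing a finite set $F\subset\Gamma$ with $\big(\sum_{t\notin F}|\xi(t)|^2\big)^{1/2}$ small, the tail vector $\xi_{F^c}=\sum_{t\notin F}\xi(t)\delta_t$ contributes at most $2\|f\|_\infty\|\xi_{F^c}\|_2$, which is as small as desired. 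It then suffices to control the finitely many terms $\xi(t)\big(u_nP_\mu(f)u_n^*-f(x)\big)\delta_t$ with $t\in F$.

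The key manipulation for each such $t$ is to peel off the left copy of $u_n$ using unitarity: because $u_nf(x)u_n^*=f(x)$, we have
\[
\big\|\big(u_nP_\mu(f)u_n^*-f(x)\big)\delta_t\big\|_2=\big\|\big(P_\mu(f)-f(x)\big)u_n^*\delta_t\big\|_2.
\]
Expanding $u_n^*=\sum_s\overline{u_n(s)}\lambda(s^{-1})$ and using that $P_\mu(f)$ is diagonal with $P_\mu(f)\delta_r=\mu(r^{-1}f)\delta_r$, the right-hand vector equals $\sum_s\overline{u_n(s)}\,(s^{-1}t\mu(f)-f(x))\,\delta_{s^{-1}t}$. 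As $s$ ranges over $\Gamma$ the vectors $\delta_{s^{-1}t}$ are orthonormal, so the squared norm is exactly $\sum_s|u_n(s)|^2\,|s^{-1}t\mu(f)-f(x)|^2$.

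Now I would split this sum along the contraction set $A=\{s\in\Gamma:|s^{-1}t\mu(f)-f(x)|<\epsilon'\ \text{for all }t\in F\}$ supplied by Definition \ref{def:contraction}, applied with the finite set $F$, the function $f$, and a small auxiliary tolerance $\epsilon'$. For $s\in A$ and $t\in F$ the scalar factor is below $\epsilon'$, so that part of the sum is at most $(\epsilon')^2\|u_n\|_2^2=(\epsilon')^2$; for $s\notin A$ the factor is at most $2\|f\|_\infty$, so that part is at most $4\|f\|_\infty^2\sum_{s\notin A}|u_n(s)|^2$. The $\mu$-contracting hypothesis gives $\|u_n|_A\|_2>1-\epsilon'$ for all $n$ beyond some $N$, hence $\sum_{s\notin A}|u_n(s)|^2<2\epsilon'$ (using $\|u_n\|_2=1$), making this part small as well. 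Summing the resulting per-$t$ bounds over $t\in F$ (each weighted by $|\xi(t)|$) and adding the discarded tail, a suitable choice of $\epsilon'$ against the fixed quantities $|F|$, $\max_t|\xi(t)|$ and $\|f\|_\infty$ brings the total below $\epsilon$.

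The entire argument is elementary once its shape is fixed; the only genuinely structural point, and the step I regard as the crux, is the identification of the splitting set $A$ with the one in Definition \ref{def:contraction}. The diagonal entry of $P_\mu(f)$ at the index $s^{-1}t$ is precisely $s^{-1}t\mu(f)$, so demanding that this lie within $\epsilon'$ of $f(x)$ simultaneously for every $t$ in the fixed finite set $F$ is exactly the condition defining the contraction set; this is why the $\mu$-contracting hypothesis, invoked with this very $F$ and $\epsilon'$, applies verbatim and forces the Fourier mass of $u_n$ to concentrate where $P_\mu(f)$ is nearly the constant $f(x)$. The remaining difficulty is purely the bookkeeping of the constants, which is routine.
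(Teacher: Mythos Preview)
Your proof is correct and follows essentially the same route as the paper: discard the tail of $\xi$ outside a finite set $F$ by the uniform bound $2\|f\|_\infty$, peel off the left $u_n$ by unitarity to reduce to $\|(P_\mu(f)-f(x))u_n^*\delta_t\|_2$, expand $u_n^*$ and split the resulting sum along the contraction set $A$ from Definition~\ref{def:contraction}. If anything, your version is marginally cleaner, since you compute the squared norm $\sum_s|u_n(s)|^2|s^{-1}t\mu(f)-f(x)|^2$ directly via orthogonality of the $\delta_{s^{-1}t}$ before splitting, whereas the paper first splits by the triangle inequality and then bounds each piece; the substance is identical.
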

\section{Crossed product of tracial von Neumann algebras}  \label{tvna} 
In this section, we apply the results of Section \ref{sec:prelim} in order to study the position of the relative commutants of certain subalgebras of crossed-product von Neumann algebras.
\begin{theorem}
\thlabel{tracialvna}
Let $\Gamma$ be a discrete countable group acting on a compact Hausdorff space $X$. Let $\mu$ be a probability measure on $X$ and let $(u_n)$ be a $\mu$-contracting sequence of unitaries in $L\Gamma$.
Let $(\mathcal{N},\tilde{\tau})$ be a tracial von Neumann algebra, and $\Gamma\curvearrowright(\mathcal{N},\tilde{\tau})$ be a trace-preserving action. 
 Then $\{u_n\} ' \cap (\mathcal{N}\rtimes\Gamma) \subset \mathcal{N}\rtimes\Gamma_x$,  where $x$ is determined by the fact that $(u_n)$ is $\mu$-contracting towards $x$.
\end{theorem}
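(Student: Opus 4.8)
The plan is to evaluate one and the same quantity in two different ways. Realize $\mathcal{M} := \mathcal{N}\rtimes\Gamma$ on $L^2(\mathcal{N},\tilde\tau)\otimes\ell^2(\Gamma)$, so that the canonical unitary $u_g\in\mathcal{M}$ acts as $V_g\otimes\lambda_g$, where $V$ is the Koopman representation of $\Gamma\curvearrowright L^2(\mathcal{N})$, while $\mathcal{N}$ acts as $\mathcal{N}\otimes 1$. For $f\in C(X)$ put $T_f := 1\otimes P_\mu(f)$; since $P_\mu(f)$ is a diagonal operator on $\ell^2(\Gamma)$, $T_f$ commutes with $\mathcal{N}$, and equivariance of $P_\mu$ gives $u_g^*T_f u_g = 1\otimes\lambda_g^*P_\mu(f)\lambda_g$. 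Write $\widehat{\ \cdot\ }\colon\mathcal{M}\hookrightarrow L^2(\mathcal{M})$ for the canonical embedding (so $\widehat1 = \widehat{1_\mathcal{N}}\otimes\delta_e$). Fix $y\in\{u_n\}'\cap\mathcal{M}$, expand $y = \sum_g a_g u_g$ with $a_g\in\mathcal{N}$, and set $\eta_n := \sum_t\overline{u_n(t)}\,\delta_{t^{-1}}\in\ell^2(\Gamma)$, a unit vector; a short computation with Fourier coefficients gives $yu_n^*\widehat1 = \widehat{yu_n^*} = \sum_g\widehat{a_g}\otimes\lambda_g\eta_n$. The object of study is $Q_n(f) := \langle T_f\, yu_n^*\widehat1,\ yu_n^*\widehat1\rangle$.

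\emph{Evaluation 1 — via the commutation relation.} First I would observe that $u_n T_f u_n^*\to f(x)\cdot 1$ in the strong operator topology. By Fell's absorption principle, the representation $g\mapsto V_g\otimes\lambda_g$ is conjugated to $1\otimes\lambda_g$ by the block-diagonal unitary $W(\xi\otimes\delta_h) = V_h^*\xi\otimes\delta_h$, which moreover fixes $T_f$; hence $u_n T_f u_n^* = W^*\bigl(1\otimes u_n P_\mu(f)u_n^*\bigr)W$, and Lemma~\ref{SOTconvergence}, applied on the $\ell^2(\Gamma)$-factor, gives the claim. Since $y$ commutes with each $u_n$ we have $u_n^*y = yu_n^*$, so $Q_n(f) = \langle T_f u_n^*(y\widehat1),\ u_n^*(y\widehat1)\rangle = \langle u_n T_f u_n^*(y\widehat1),\ y\widehat1\rangle$, whence
\[
\lim_n Q_n(f) = f(x)\,\|y\widehat1\|_2^2 = f(x)\sum_g\|a_g\|_2^2 .
\]

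\emph{Evaluation 2 — by a direct computation.} Expanding $T_f = 1\otimes P_\mu(f)$ gives $Q_n(f) = \sum_{g,h}\tilde\tau(a_h^*a_g)\, c_n(g,h)$ with $c_n(g,h) := \langle P_\mu(f)\lambda_g\eta_n,\ \lambda_h\eta_n\rangle$, and unwinding $P_\mu(f)\delta_t = \mu(t^{-1}f)\delta_t$ yields $c_n(g,h) = \sum_s\overline{u_n(s)}\,u_n(sg^{-1}h)\int_X f(gs^{-1}y)\,d\mu(y)$. I expect the following as $n\to\infty$. On the diagonal, $c_n(g,g) = \sum_s|u_n(s)|^2\int_X f(gs^{-1}y)\,d\mu(y)\to f(gx)$, because $(u_n)$ is $\mu$-contracting towards $x$ (apply the definition to the continuous function $z\mapsto f(gz)$ with $F = \{e\}$: the $\ell^2$-mass of $u_n$ concentrates on the $s$ for which the integral is close to $f(gx)$). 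Off the diagonal, with $c := g^{-1}h\neq e$, unitarity of $u_n$ gives the exact identity $\sum_s\overline{u_n(s)}\,u_n(sc) = (u_n^*u_n)(c) = 0$, so $c_n(g,h) = \sum_s\overline{u_n(s)}\,u_n(sc)\bigl(\int_X f(gs^{-1}y)\,d\mu(y) - f(gx)\bigr)$, and splitting the sum according to whether $s$ lies in the $\mu$-contraction set and applying Cauchy--Schwarz gives $c_n(g,h)\to 0$. Since $|c_n(g,h)|\le\|f\|_\infty$ and $\sum_g\|a_g\|_2^2 = \|y\|_2^2 < \infty$, a truncation (the tail $\sum_{g\notin S}\|a_g\|_2^2$ is small uniformly in $n$) upgrades this to $\lim_n Q_n(f) = \sum_g\|a_g\|_2^2\, f(gx)$.

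Comparing, $\sum_g\|a_g\|_2^2\, f(gx) = f(x)\sum_g\|a_g\|_2^2$ for every $f\in C(X)$. Now fix $g_0\in\Gamma$ with $g_0 x\neq x$: as $X$ is compact Hausdorff, Urysohn's lemma provides $f\in C(X)$ with $0\le f\le 1$, $f(x) = 0$ and $f(g_0 x) = 1$, for which the identity reads $\sum_g\|a_g\|_2^2\, f(gx) = 0$; being a sum of non-negative terms, this forces $\|a_{g_0}\|_2^2 = \|a_{g_0}\|_2^2 f(g_0 x) = 0$. Hence $a_g = 0$ for every $g\notin\Gamma_x$, i.e. $y = \sum_{g\in\Gamma_x} a_g u_g\in\mathcal{N}\rtimes\Gamma_x$. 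The delicate point is Evaluation 2: the two limiting mechanisms for $c_n(g,h)$ — the $\mu$-contraction estimate on the diagonal versus the exact cancellation $(u_n^*u_n)(c) = 0$ off the diagonal — must be controlled well enough to survive the summation over the (generally infinite) Fourier support of $y$, which is what the truncation using $\sum_g\|a_g\|_2^2 < \infty$ provides. (Invoking Lemma~\ref{SOTconvergence} uses that $u_n\to 0$ weakly, which holds in the situations of interest.)
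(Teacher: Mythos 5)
Your argument is correct, and it reaches the conclusion by a genuinely different route than the paper. The paper first constructs an auxiliary state $\psi:=\lim_n\tau\circ\mathrm{ad}(u_n)$ on $\mathbb{B}(L^2(\mathcal{N},\tilde{\tau})\overline{\otimes}\ell^2\Gamma)$ (\thref{almosthyperstate}), shows that $P_\mu(C(X))$ lies in its multiplicative domain with $\psi|_{P_\mu(C(X))}=\delta_x$ and that $\psi(aP_\mu(f)b)=\psi(P_\mu(f)ab)$ for $a\in\{u_n\}'$, and then, for a \emph{unitary} $u$ in the relative commutant, runs a Cauchy--Schwarz estimate against a Urysohn function to force $\|\mathbb{E}_{x}(u)\|_2=1$. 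You instead evaluate the single quantity $Q_n(f)=\tau\bigl(u_n y^* P_\mu(f)\, y u_n^*\bigr)$ in two ways --- once via the commutation relation and \thref{SOTconvergence}, giving $f(x)\|y\|_2^2$, and once via Fourier coefficients, giving $\sum_g\|a_g\|_2^2 f(gx)$ --- and compare. Your route is more elementary (no weak$^*$ limit of states, no multiplicative-domain bookkeeping) and kills the Fourier coefficients $a_g$ for $g\notin\Gamma_x$ directly for an arbitrary element of the relative commutant; what the paper's almost-hypertrace buys is a reusable object that packages the contraction once and for all. Both proofs rest on the same analytic core, and both share the cosmetic wrinkle that \thref{SOTconvergence} is stated with the hypothesis $u_n\to0$ weakly, which is not among the theorem's assumptions; this is harmless, since that hypothesis is never actually used in the lemma's proof (only unitarity and the $\mu$-contraction estimate are).

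One step in your Evaluation 2 needs to be made precise: the truncation cannot be performed termwise on the double sum $\sum_{g,h}\tilde{\tau}(a_h^*a_g)c_n(g,h)$, because the only available bound $|\tilde{\tau}(a_h^*a_g)|\le\|a_g\|_2\|a_h\|_2$ controls the tail by $\bigl(\sum_{g\notin S}\|a_g\|_2\bigr)$-type quantities, and $(\|a_g\|_2)_g$ is square-summable but not summable. The correct (and available) move is to truncate the \emph{vector}: with $y_S=\sum_{g\in S}a_g u_g$, traciality gives $\|yu_n^*\widehat{1}-y_Su_n^*\widehat{1}\|_2=\|(y-y_S)u_n^*\|_2=\|y-y_S\|_2=\bigl(\sum_{g\notin S}\|a_g\|_2^2\bigr)^{1/2}$ uniformly in $n$, whence $|Q_n(f)-Q_n^S(f)|\le 2\|f\|_\infty\|y\|_2\,\|y-y_S\|_2$, and the finite sum $Q_n^S(f)$ converges to $\sum_{g\in S}\|a_g\|_2^2 f(gx)$ exactly as you describe (equivalently, the vectors $\lambda_g\eta_n$ are orthonormal because $u_n$ is unitary). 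With that reading of the truncation, the proof is complete.
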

We shall view $\mathcal{N}\rtimes\Gamma\subset \mathbb{B}(L^2(\mathcal{N},\tilde{\tau})\overline{\otimes} \ell^2\Gamma)$. Moreover, 
Let $X$ be a $\Gamma$-space. Let $P_{\mu}:C(X)\to\ell^{\infty}(\Gamma)$ be the Poisson transformation. This gives us a u.c.p $\Gamma$-equivariant map from $C(X)$ to $\ell^{\infty}(\Gamma)$. We can view $\ell^{\infty}(\Gamma)$ as multiplication operators on $\mathbb{B}(\ell^2(\Gamma))$. For $f\in \ell^{\infty}(\Gamma)$, the map $M(f):\ell^2(\Gamma)\to\ell^2(\Gamma)$ defined by $M(f)(\delta_t)=f(t)\delta_t$ is linear and bounded. Therefore, we obtain  a u.c.p map $M\circ P_{\mu}:C(X)\to1\otimes \mathbb{B}(\ell^2(\Gamma)$. We see that every element of $M\circ P_{\mu}(C(X))$ commutes with $\mathcal{N}$. We will ignore $M$ for the most part and write $P_{\mu}(f)$ for the ease of notation.
We denote by $\tau$, the canonical trace $\tilde{\tau}\circ\mathbb{E}$. Note that here $\mathbb{E}:\mathcal{N}\rtimes\Gamma\to\mathcal{N}$ is the canonical conditional expectation. Moreover, we consider the $\|.\|_2$-norm induced by $\tau$. We also denote by $\mathbb{E}_x$, the canonical conditional expectation from $\mathcal{N}\rtimes\Gamma$ to $\mathcal{N}\rtimes\Gamma_x$. 

A crucial ingredient in these arguments is that the state obtained in the limiting stage satisfies some tracial property. While this is automatic in the case of amenable tracial von Neumann algebras, we cannot expect it to hold for us. Nonetheless, before we head on to the proof, we show the existence of an \say{almost-hypertrace}, the last technical bit.
\begin{prop}
\thlabel{almosthyperstate}  
Let $\Gamma$ be a discrete countable group acting on a compact Hausdorff space $X$. Let $\mu$ be a proability measure on $X$ and $(u_n) \subset L\Gamma$ a $\mu$-contracting sequence for the action on $X$. Let $(\mathcal{N},\tilde{\tau})$ be a tracial von Neumann algebra, and $\Gamma\curvearrowright(\mathcal{N},\tilde{\tau})$ be a trace-preserving action. Then, there exists a state $\psi\in S\left(\mathbb{B}(L^2(\mathcal{N},\tilde{\tau})\overline{\otimes} \ell^2\Gamma)\right)$ such that $\psi|_{\mathcal{N}\rtimes\Gamma}=\tau$, $\psi|_{P_{\mu}(C(X))}=\delta_x$, and $\psi\left(aP_{\mu}(f)b\right)=\psi\left(P_{\mu}(f)ab\right)$ for all $a\in \{u_n\}'$ and for all $b\in \mathbb{B}(L^2(\mathcal{N},\tilde{\tau})\overline{\otimes} \ell^2\Gamma)$. In particular, $P_{\mu}(C(X))$ falls in the multiplicative domain of $\psi$.
\begin{proof}
Let $\tau$ denote the canonical trace $\tilde{\tau}\circ\mathbb{E}$. Let $P_\mu : C(X) \rightarrow \mathbb{B}(l^2 \Gamma)$ be the Poisson map, which is ucp $\Gamma$-equivariant. Let $u_n\in L(\Gamma)$ be a $\mu$-contracting sequence towards $x$. We identify an operator $T$ on $\mathbb{B}(\ell^2(\Gamma))$ with $\text{id}\otimes T$, which is an operator on $\mathbb{B}(H\overline{\otimes}\ell^2(\Gamma))$. In this way, we view $\mathbb{B}(\ell^2(\Gamma))$ as a $\Gamma$-invariant subalgebra of $\mathbb{B}(H\overline{\otimes}\ell^2(\Gamma))$. Moreover, if $T_n\in \mathbb{B}(\ell^2\Gamma)$ is a uniformly bounded sequence such that $T_n\xrightarrow[\mathbb{B}(\ell^2\Gamma)]{\text{SOT}} T$, then, 
$\text{id}\otimes T_n\xrightarrow[\mathbb{B}(H\overline{\otimes}\ell^2\Gamma)]{\text{SOT}} \text{id}\otimes T$. Therefore, using \thref{SOTconvergence}, we see that for every $f \in C(X)$, $$u_n(P_\mu (f)) u_n^* \xrightarrow[\text{SOT}]{n\to\infty} f(x) \cdot 1.$$ In particular, since $(u_n(P_\mu (f)) u_n^*)$ is uniformaly bounded, this implies that 
$$u_n(P_\mu (f))(P_\mu (f))^* u_n^* \xrightarrow[\text{SOT}]{n\to\infty} f(x)\overline{f(x)} \cdot 1.$$    
Consider now the (separable) $C^*$-algebra $A$ generated by $P_\mu (C(X))$. Note that the state $\tau$ is of the form $\hat{1}_{\mathcal{N}}\otimes\delta_e$, and hence, is defined on $\mathbb{B}(L^2(\mathcal{N},\tilde{\tau})\overline{\otimes} l^2\Gamma)$. Consider, after passing to a subnet if necessary, a weak$^*$ limit $$\psi(\cdot):= \lim_n \tau \circ ad (u_n)(\cdot)=\lim_n\left\langle (\cdot) \hat{1}_{\mathcal{N}}\otimes\hat{u}_n, \hat{1}_{\mathcal{N}}\otimes\hat{u}_n\right\rangle\in S(\mathbb{B}(L^2(\mathcal{N},\tilde{\tau})\overline{\otimes} l^2\Gamma)).$$We see that $P_\mu(f)$ is in the multiplicative domain of $\psi$ for every $f \in C(X)$.\\
    \textit{Claim}: $\psi ((aP_\mu (f) - P_\mu (f)a) (aP_\mu (f)-P_\mu (f)a)^*) =0$ for all $a$ which commute with $\{u_n:n\in\mathbb{N}\}$.\\
Let us observe that \begin{align*}&\psi ((aP_\mu (f) -P_\mu (f)a)(aP_\mu (f)-P_\mu (f)a)^*)\\&= \psi(aP_{\mu}(f)P_{\mu}(f)^*a^*)-\psi (P_{\mu}(f)aP_{\mu}(f)^*a^*)\\&-\psi (aP_{\mu}(f)a^*P_{\mu}(f)^*)+\psi (P_{\mu}(f)aa^*P_{\mu}(f)^*).\end{align*} 
Now, since $\phi$ is normal and $a$ commutes with $\{u_n:n\in\mathbb{N}\}$,
\begin{align*}&\psi(aP_{\mu}(f)P_{\mu}(f)^*a^*)\\&=\lim_n \phi( u_na P_{\mu}(f) P_{\mu} (f)^* a^*u_n^*)\\&=\lim_n \phi(a u_n P_{\mu}(f) P_{\mu} (f)^* u_n^* a^*)\\&=|f(x)|^2\phi(aa^*).\end{align*}  On the other hand, since $P_\mu (f)$ and $P_\mu (f)^*$ are in the multiplicative domain, we have $\psi (P_\mu (f) aa^* P_\mu (f)^*) = f(x) \overline{f(x)} \psi (aa^*)$. Since $a$ commutes with $\{u_n:n\in\mathbb{N}\}$, we see that $\psi (aa^*)=\lim_n\phi(u_naa^*u_n^*)=\phi(aa^*)$. Therefore, $$\psi (P_\mu (f) aa^* P_\mu (f)^*) = f(x) \overline{f(x)} \psi (aa^*)=|f(x)|^2\phi(aa^*).$$  Arguing similarly, we see that
\begin{align*}
\psi(P_{\mu}(f)aP_{\mu}(f)^*a^*)&=f(x)\psi(aP_{\mu}(f)^*a^*)\\&=f(x)\lim_n\phi(u_naP_{\mu}(f)^*a^*u_n^*)\\&=f(x)\lim_n\phi(au_nP_{\mu}(f)^*u_n^*a^*)\\&=f(x)\overline{f(x)}\phi(aa^*)\\&=|f(x)|^2 \phi(aa^*).   
\end{align*}
It also follows similarly that 
\[\psi (aP_{\mu}(f)a^*P_{\mu}(f)^*)=|f(x)|^2\phi(aa^*).\]
Consequently, we see that $\psi((aP_\mu (f) - P_\mu (f)a) (aP_\mu (f)-P_\mu (f)a)^*) =0$ for every $a \in \{u_n\}'$.
Moreover, for every $b\in\mathbb{B}(L^2(\mathcal{N},\tilde{\tau})\overline{\otimes} l^2\Gamma)$ and $a\in L\Lambda'$, we see that
\begin{align*}&|\psi(P_\mu (f) a b - a P_\mu (f) b)|^2 \\&\leq \psi ((P_\mu (f) a - aP_\mu (f)) (P_\mu (f) a - a P_\mu (f))^*) \psi (b^* b)\\&=0\end{align*}
Therefore, it follows that $\psi(P_\mu (f)ab)=\psi(a P_\mu (f) b)$ for all $a\in \{u_n\}'$ and $b\in \mathbb{B}(L^2(\mathcal{N},\tilde{\tau})\overline{\otimes} l^2\Gamma)$.   
\end{proof}
\end{prop}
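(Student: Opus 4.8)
The plan is to realise $\psi$ as a weak$^*$ cluster point of the states obtained by conjugating the canonical trace by the contracting unitaries, and then to read off every required property from the singularity phenomenon recorded in \thref{SOTconvergence}. Write $\Omega=\hat{1}_{\mathcal{N}}\otimes\delta_e$ for the cyclic vector implementing $\tau=\tilde{\tau}\circ\mathbb{E}$ on $\mathbb{B}(L^2(\mathcal{N},\tilde{\tau})\overline{\otimes}\ell^2\Gamma)$, and set $\psi_n:=\tau\circ\mathrm{Ad}(u_n)$, so that $\psi_n(T)=\tau(u_nTu_n^*)=\langle Tu_n^*\Omega,u_n^*\Omega\rangle$. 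Since each $u_n$ is unitary, $u_n^*\Omega$ is a unit vector and each $\psi_n$ is a state; by weak$^*$ compactness of the state space I pass to a subnet and let $\psi:=\lim_n\psi_n$ be a weak$^*$ limit point.

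The two ``boundary'' normalisations come for free. For $a\in\mathcal{N}\rtimes\Gamma$, traciality of $\tau$ on $\mathcal{N}\rtimes\Gamma$ together with $u_n\in L\Gamma\subset\mathcal{N}\rtimes\Gamma$ gives $\psi_n(a)=\tau(u_nau_n^*)=\tau(a)$ for every $n$, hence $\psi|_{\mathcal{N}\rtimes\Gamma}=\tau$. For $f\in C(X)$, \thref{SOTconvergence} (after identifying $P_\mu(f)$ with $\mathrm{id}\otimes P_\mu(f)$) gives $u_nP_\mu(f)u_n^*\to f(x)\cdot 1$ in the strong operator topology, so evaluating at the fixed vector $\Omega$ yields $\psi(P_\mu(f))=\lim_n\langle u_nP_\mu(f)u_n^*\Omega,\Omega\rangle=f(x)$, i.e. $\psi|_{P_\mu(C(X))}=\delta_x$. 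For the multiplicative domain I use joint strong continuity of multiplication on bounded sets: combining $u_nP_\mu(f)u_n^*\to f(x)$ with $u_nP_\mu(f)^*u_n^*=u_nP_\mu(\bar f)u_n^*\to\overline{f(x)}$ (again \thref{SOTconvergence}, using $P_\mu(f)^*=P_\mu(\bar f)$) gives $u_nP_\mu(f)P_\mu(f)^*u_n^*\to|f(x)|^2$ and $u_nP_\mu(f)^*P_\mu(f)u_n^*\to|f(x)|^2$ strongly, whence $\psi(P_\mu(f)P_\mu(f)^*)=\psi(P_\mu(f)^*P_\mu(f))=|f(x)|^2=|\psi(P_\mu(f))|^2$. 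The standard characterisation of the multiplicative domain then places every $P_\mu(f)$ in it.

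The core is the commutation identity, which I would obtain by showing that $c:=aP_\mu(f)-P_\mu(f)a$ satisfies $\psi(cc^*)=0$ for each $a\in\{u_n\}'$, and then invoking Cauchy--Schwarz. Note first that $\{u_n\}'$ is automatically $*$-closed, since $a$ commuting with the unitary $u_n$ forces $a$ to commute with $u_n^*$; consequently $u_naa^*u_n^*=aa^*$, so $\psi(aa^*)=\tau(aa^*)$. Expanding, $\psi(cc^*)$ is the alternating sum of the four terms $\psi(aP_\mu(f)P_\mu(f)^*a^*)$, $\psi(aP_\mu(f)a^*P_\mu(f)^*)$, $\psi(P_\mu(f)aP_\mu(f)^*a^*)$ and $\psi(P_\mu(f)aa^*P_\mu(f)^*)$, and I claim each equals $|f(x)|^2\psi(aa^*)$. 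The diagonal term is immediate from the multiplicative domain, $\psi(P_\mu(f)\,aa^*\,P_\mu(f)^*)=f(x)\overline{f(x)}\psi(aa^*)$. The first term is handled by commuting $a$ past $u_n$ before passing to the limit:
\begin{align*}
\psi(aP_\mu(f)P_\mu(f)^*a^*)&=\lim_n\tau\bigl(u_n a\,P_\mu(f)P_\mu(f)^*\,a^*u_n^*\bigr)\\
&=\lim_n\langle\,u_nP_\mu(f)P_\mu(f)^*u_n^*\,a^*\Omega,\;a^*\Omega\rangle=|f(x)|^2\tau(aa^*),
\end{align*}
where the second equality uses $u_na=au_n$ and $u_n^*a^*=a^*u_n^*$, and the last uses the strong convergence above tested against the fixed vector $a^*\Omega$. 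The two mixed terms combine one use of the multiplicative domain with one commuting-and-limiting step, e.g. $\psi(aP_\mu(f)a^*P_\mu(f)^*)=\overline{f(x)}\,\psi(aP_\mu(f)a^*)=\overline{f(x)}\,\lim_n\langle u_nP_\mu(f)u_n^*a^*\Omega,a^*\Omega\rangle=|f(x)|^2\tau(aa^*)$, and symmetrically for $\psi(P_\mu(f)aP_\mu(f)^*a^*)$ using $u_nP_\mu(f)^*u_n^*\to\overline{f(x)}$. Thus all four terms coincide and $\psi(cc^*)=0$. Finally, for arbitrary $b$ Cauchy--Schwarz for the state $\psi$ gives $|\psi(cb)|^2\le\psi(cc^*)\,\psi(b^*b)=0$, which is precisely $\psi(aP_\mu(f)b)=\psi(P_\mu(f)ab)$.

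The main obstacle is the topological bookkeeping rather than any individual computation: $\psi$ is a weak$^*$ limit, but every estimate is fed by the strong-operator convergences of \thref{SOTconvergence}, so each of the four terms must be rewritten in the form $\lim_n\langle T_n\xi,\xi\rangle$ with $\xi$ a single fixed vector ($\Omega$ or $a^*\Omega$) and $T_n\to T$ boundedly and strongly, which is exactly the regime where such limits are valid. The second delicate point is that $a$ ranges over all of $\{u_n\}'\subset\mathbb{B}(L^2(\mathcal{N},\tilde{\tau})\overline{\otimes}\ell^2\Gamma)$ and need not belong to $\mathcal{N}\rtimes\Gamma$; the only structural fact about $a$ that may be used is $au_n=u_na$, so the reduction of the off-diagonal terms must lean on this commutation in place of any traciality of $a$.
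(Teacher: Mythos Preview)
Your proof is correct and follows essentially the same route as the paper: construct $\psi$ as a weak$^*$ cluster point of $\tau\circ\mathrm{Ad}(u_n)$, invoke \thref{SOTconvergence} to obtain $\psi|_{P_\mu(C(X))}=\delta_x$ and the multiplicative-domain property, expand $\psi(cc^*)$ into four terms that each evaluate to $|f(x)|^2\tau(aa^*)$ by commuting $a$ past $u_n$ and testing the SOT convergence against the fixed vector $a^*\Omega$, and conclude with Cauchy--Schwarz. Your presentation is in fact a bit more careful than the paper's in places (explicitly verifying both multiplicative-domain inequalities, explicitly noting that $\{u_n\}'$ is $*$-closed, and consistently writing everything through the vector state at $\Omega$), but the underlying argument is identical.
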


Our idea of the proof is motivated by \cite[Theorem~1.4]{boutonnet2015maximal}.
\begin{proof}[Proof of \thref{tracialvna}] 
From \thref{almosthyperstate}, we can find a state $\psi\in S\left(\mathbb{B}(L^2(\mathcal{N},\tilde{\tau})\overline{\otimes} \ell^2\Gamma)\right)$ such that $\psi|_{\mathcal{N}\rtimes\Gamma}=\tau$, $\psi|_{P_{\mu}(C(X))}=\delta_x$, and $\psi\left(aP_{\mu}(f)b\right)=\psi\left(P_{\mu}(f)ab\right)$ for all $a\in \{u_n\}'$ and for all $b\in \mathbb{B}(L^2(\mathcal{N},\tilde{\tau})\overline{\otimes} \ell^2\Gamma)$. Let $\mathcal{M}$ denote the von Neumann algebra  $\{u_n\} ' \cap (\mathcal{N}\rtimes\Gamma)$. Let $u\in\mathcal{M}$ be a unitary element. We shall show that $\|\mathbb{E}_{x}(u)\|_2=1$ from whence it will follow that $u\in \mathcal{N}\rtimes\Gamma_x$. Let $\epsilon>0$. Let $u_0=\sum_{i=1}^na_i\lambda(s_i)\in\mathcal{N}\rtimes\Gamma$ be such that \begin{equation}
\label{firstapprox}
\|u^*-u_0\|_2<\epsilon.\end{equation} Let us write $F=\{s_1,s_2,\ldots,s_n\}$. Then, we can rewrite $$u_0=\sum_{s\in F\cap\Gamma_x}a_s\lambda(s)+\sum_{s\in F\cap\Gamma_x^c}a_s\lambda(s)$$ In particular, we see that $sx\ne x$ for all $s\in F\cap\Gamma_x^c$. Therefore, we can find $f\in C(X)$ with $0\le f\le 1$ such that $f(x)=1$ and $f(sx)=0$ for all $s\in F\cap\Gamma_x^c$. Below, we write $f$ instead of $P_{\mu}(f)$ for ease of notation.
Let us now observe that 
\begin{align*}
\left|\psi\left(f(uu_0-1)\right)\right|&\le \sqrt{\psi\left((uu_0-1)^*(uu_0-1)\right)}\sqrt{\psi(ff^*)}\\&=\|uu_0-1\|_2&\text{$\left(\psi|_{\mathcal{N}\rtimes\Gamma}=\tau\right)$}\\&\le \|u^*-u_0\|_2<\epsilon.    
\end{align*}
Therefore,
\begin{align*}
\left|\psi\left(ufu_0\right)\right|&= \left|\psi\left((fuu_0\right)\right|\\&=\left|\psi\left(f(uu_0-1)\right)+\psi\left(f\right)\right|\\&\ge \left|\psi(f)-|\psi\left(f(uu_0-1)\right)|\right|\ge 1-\epsilon.    
\end{align*}
To reiterate, 
\begin{equation}
\label{ineq:firstinequality}
\left|\psi\left(ufu_0\right)\right|\ge 1-\epsilon.   \end{equation}
On the other hand,
\begin{align*}
&\left|\psi\left((ufu_0\right)\right|\\&\le\left|\psi\left(uf\left(\sum_{s\in F\cap\Gamma_x}a_s\lambda(s)\right)\right)\right|+  \left|\psi\left(uf\left(\sum_{s\in F\cap\Gamma_x^c}a_s\lambda(s)\right)\right)\right|\\&\le \left|\psi\left(uf\mathbb{E}_{x}(u_0)\right)\right|+\sum_{s\in F\cap\Gamma_x^c}\left|\psi\left(ufa_s\lambda(s)\right)\right| 
\end{align*}
Since $f\in C(X)$, $a_s\in \mathcal{N}$ and every element of $C(X)$ commutes with $\mathcal{N}$ (see the paragraph below \thref{tracialvna}), we see that 
\begin{align*}\sum_{s\in F\cap\Gamma_x^c}\left|\psi\left(ufa_s\lambda(s)\right)\right|&=\sum_{s\in F\cap\Gamma_x^c}\left|\psi\left(ua_sf\lambda(s)\right)\right|\\&= \sum_{s\in F\cap\Gamma_x^c}\left|\psi\left(ua_s\lambda(s)s^{-1}f\right)\right|\\&=\sum_{s\in F\cap\Gamma_x^c}\left|\psi\left(ua_s\lambda(s)\right)f(sx)\right|\\&=0\end{align*}
It follows therefore that $\left|\psi\left((ufu_0\right)\right|\le \left|\psi\left(uf\mathbb{E}_{x}(u_0)\right)\right|$.
Combining this along with equation~\eqref{firstapprox}, equation~\eqref{ineq:firstinequality} and the Cauchy-Schwartz inequality, we see that
\begin{align*}
1-\epsilon\le \left|\psi(ufu_0)\right|&\le\left|\psi\left(uf\mathbb{E}_{x}(u_0)\right)\right|\\&\le\sqrt{\psi(uff^*u^*)}\left\|\mathbb{E}_{x}(u_0)\right\|_2\\&\le\left\|\mathbb{E}_{x}(u)\right\|_2+\epsilon
\end{align*}
As a result, it follows that $\left\|\mathbb{E}_{x}(u)\right\|_2\ge 1-2\epsilon$. Since $\epsilon>0$ is arbitrary, it follows that $u\in\mathcal{N}\rtimes\Gamma_x$.
\end{proof}
We obtain the following as an immediate result.
\begin{cor}
\thlabel{relativecommutantforgroupvna}
Let $\Gamma$ be a discrete countable group acting on a compact Hausdorff space $X$. Let $(u_n)$ be a $\mu$-contracting sequence for the action on $X$ for some probability measure $\mu$ on $X$. Then $\{u_n\}' \cap L\Gamma \subset L(\Gamma_x)$,  where $x$ is determined by the fact that $u_n$ is  $\mu$-contracting towards $x$.
\end{cor}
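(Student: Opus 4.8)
The plan is to deduce the statement directly from \thref{tracialvna} by specializing the tracial von Neumann algebra $(\mathcal{N},\tilde{\tau})$ to the trivial one. Concretely, I would take $\mathcal{N}=\mathbb{C}$ with $\tilde{\tau}=\mathrm{id}_{\mathbb{C}}$ and let $\Gamma$ act trivially on $\mathbb{C}$; this action is manifestly trace-preserving, so the hypotheses of \thref{tracialvna} are met as soon as $(u_n)$ is a $\mu$-contracting sequence of unitaries in $L\Gamma$, which is precisely what is assumed here.

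The next step is to record the standard identifications. The crossed product $\mathcal{N}\rtimes\Gamma$ is canonically $L(\Gamma)$, with $\mathcal{N}=\mathbb{C}\cdot 1$ sitting inside as the scalars, and under this identification the canonical trace $\tau=\tilde{\tau}\circ\mathbb{E}$ becomes the canonical trace $\tau_0$ on $L(\Gamma)$; likewise $\mathcal{N}\rtimes\Gamma_x$ is canonically $L(\Gamma_x)\subset L(\Gamma)$, and $\mathbb{E}_x$ becomes the $\tau_0$-preserving conditional expectation $L(\Gamma)\to L(\Gamma_x)$ killing $\lambda(s)$ for $s\notin\Gamma_x$. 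With these in hand, the conclusion $\{u_n\}'\cap(\mathcal{N}\rtimes\Gamma)\subset\mathcal{N}\rtimes\Gamma_x$ of \thref{tracialvna} reads exactly $\{u_n\}'\cap L(\Gamma)\subset L(\Gamma_x)$.

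There is no genuine obstacle here: the corollary is a pure specialization, and the only thing worth double-checking is that each identification above is $\Gamma$-equivariant and intertwines the relevant traces and conditional expectations, which is routine. As an alternative one could simply re-run the proof of \thref{tracialvna} verbatim in the case $\mathcal{N}=\mathbb{C}$: the Poisson transformation is then the u.c.p.\ map $P_\mu:C(X)\to\ell^\infty(\Gamma)\subset\mathbb{B}(\ell^2\Gamma)$, \thref{almosthyperstate} yields a state $\psi$ on $\mathbb{B}(\ell^2\Gamma)$ with $\psi|_{L(\Gamma)}=\tau_0$, $\psi|_{P_\mu(C(X))}=\delta_x$, $P_\mu(C(X))$ in its multiplicative domain and $\psi(aP_\mu(f)b)=\psi(P_\mu(f)ab)$ for $a\in\{u_n\}'$, and the $\|\cdot\|_2$-estimate bounding $\|\mathbb{E}_x(u)\|_2$ from below by $1-2\epsilon$ goes through word for word.
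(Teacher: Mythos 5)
Your proposal is correct and matches the paper's intent exactly: the paper presents this corollary as an immediate consequence of \thref{tracialvna}, obtained precisely by specializing to $\mathcal{N}=\mathbb{C}$ with the trivial action and identifying $\mathbb{C}\rtimes\Gamma$ with $L(\Gamma)$ and $\mathbb{C}\rtimes\Gamma_x$ with $L(\Gamma_x)$. The identifications you flag as routine are indeed routine, so nothing further is needed.
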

\begin{example}
Let $\Gamma$ be a convergence group. Then, in view of Example \ref{ex:convggrp}, given a parabolic element $s \in \Gamma$, for every tracial crossed product $\mathcal{N} \rtimes \Gamma$ and every diffuse subalgebra $\mathcal{M}$ of $L(\langle s \rangle)$, the relative commutant of $\mathcal{M}$ in $    \mathcal{N} \rtimes \Gamma$ is injective. If $s \in \Gamma$ is loxodromic, then $L(\langle s \rangle)' \cap (\mathcal{N} \rtimes \Gamma)$ is injective.
    \end{example}
\section{Relative commutants of subgroups of negatively curved groups}\label{ncv}
In this section, we examine the relative commutants of subgroups inside groups that satisfy \say{north pole-south pole}-dynamics. We begin with the following singularity phenomenon, which has been exploited in the past to prove rigidity results (see, for example,  \cite{kalantar_kennedy_boundaries, HartKal, KalPan, BBHP, AH24} etc). 
\begin{lemma}
\thlabel{twodelta}
Let $X$ be a continuous $\Gamma$-space. Let $\tau\in S\left(C(X)\rtimes_r\Gamma\right)$ such that $\tau|_{C(X)}=a\delta_x+(1-a)\delta_y$ for some $x\ne y\in X$. Then, 
$\tau(\lambda(s))=0$ for all $s \in \Gamma$ with $s\{x,y\}\cap\{x,y\}=\emptyset$.
\begin{proof}
Let $s\in \Gamma$ be such that $s\{x,y\}\cap\{x,y\}=\emptyset$. Using Uryhson's lemma, we can find a non-negative continuous function $f\in C(X)$ with $0<f<1$ such that $f|_{\{x,y\}}=1$ and $f|_{\{sx,sy\}}=0$. Using Cauchy-Schwartz inequality, we obtain
\begin{align*}
\left|\tau(f\lambda(s))\right|^2&=\left|\tau\left(\sqrt{f}\sqrt{f}\lambda(s)\right)\right|^2\\&\le\tau\left(f\right)\tau\left(\lambda(s^{-1})f\lambda(s)\right)\\&=\tau(f)\tau(s^{-1}.f)
\end{align*}
Since $\tau|_{C(X)}=a\delta_x+(1-a)\delta_y$, we obtain that \[\tau(s^{-1}f)=as^{-1}.f(x)+(1-a)s^{-1}.f(y)=af(sx)+(1-a)f(sy)=0.\]
This shows that $\tau(f\lambda(s))=0$.
On the other hand, applying Cauchy-Schwartz inequality again,
\begin{align*}
|\tau\left((1-f)\lambda(s)\right)|^2&=\left|\tau\left(\sqrt{1-f}\sqrt{1-f}\lambda(s)\right)\right|^2\\&\le\tau\left(1-f\right)\tau\left(\lambda(s^{-1})(1-f)\lambda(s)\right)\\&=\tau(1-f)\tau(s^{-1}.(1-f))
\end{align*}
Let us now see that 
\[\tau(1-f)=a(1-f(x))+(1-a)(1-f(y))=a(0)+(1-a)(0)=0\]
Therefore, we obtain that $\tau\left((1-f)\lambda(s)\right)=0$.
Now, combining the above two identities, we see that
\[\tau(\lambda(s))=\tau(f\lambda(s))+\tau\left((1-f)\lambda(s)\right)=0.\]
This completes the proof.
\end{proof}
\end{lemma}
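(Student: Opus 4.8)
The plan is to derive everything from positivity of the state $\tau$ via the Cauchy--Schwarz inequality, applied after inserting a continuous cutoff function adapted to the four points $x$, $y$, $sx$, $sy$. Fix $s\in\Gamma$ with $s\{x,y\}\cap\{x,y\}=\emptyset$. Since $X$ is compact Hausdorff, the finite sets $\{x,y\}$ and $\{sx,sy\}$ are disjoint closed sets, so Urysohn's lemma produces $f\in C(X)$ with $0\le f\le 1$, $f\equiv 1$ on $\{x,y\}$ and $f\equiv 0$ on $\{sx,sy\}$. Requiring $0\le f\le 1$ is what makes $\sqrt{f}$ and $\sqrt{1-f}$ lie in $C(X)$, which is what the Cauchy--Schwarz step needs.

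Next I would split $\lambda(s)=f\lambda(s)+(1-f)\lambda(s)$ and treat the two summands separately. For the first, write $f\lambda(s)=\sqrt{f}\,\bigl(\sqrt{f}\lambda(s)\bigr)$ and apply $|\tau(g^*h)|^2\le\tau(g^*g)\tau(h^*h)$ with $g=\sqrt{f}$ and $h=\sqrt{f}\lambda(s)$; using the covariance relation $\lambda(s)^*f\lambda(s)=s^{-1}.f$ this gives $|\tau(f\lambda(s))|^2\le\tau(f)\,\tau(s^{-1}.f)$. Since $\tau|_{C(X)}=a\delta_x+(1-a)\delta_y$, we get $\tau(s^{-1}.f)=a\,f(sx)+(1-a)\,f(sy)=0$, hence $\tau(f\lambda(s))=0$. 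For the second summand the same computation gives $|\tau((1-f)\lambda(s))|^2\le\tau(1-f)\,\tau(s^{-1}.(1-f))$, and this time it is the \emph{first} factor that vanishes: $\tau(1-f)=a(1-f(x))+(1-a)(1-f(y))=0$ because $f\equiv 1$ on $\{x,y\}$. Thus $\tau((1-f)\lambda(s))=0$ as well, and adding the two identities yields $\tau(\lambda(s))=\tau(f\lambda(s))+\tau((1-f)\lambda(s))=0$.

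There is no real obstacle here; the argument is a two-sided Urysohn trick, and the one thing to keep straight is which of the two Cauchy--Schwarz bounds is annihilated by $f\equiv 1$ on $\{x,y\}$ versus $f\equiv 0$ on $\{sx,sy\}$, together with the bookkeeping for the action convention so that $(s^{-1}.f)(x)=f(sx)$ and $\lambda(s)^*f\lambda(s)=s^{-1}.f$. The normality of $X$ needed for Urysohn's lemma is automatic from compact Hausdorffness. If one wanted a quantitative version (for states only approximately of the form $a\delta_x+(1-a)\delta_y$) the same scheme would go through with the zeros replaced by small error terms.
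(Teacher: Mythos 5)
Your proposal is correct and follows essentially the same argument as the paper: the same Urysohn function, the same two Cauchy--Schwarz estimates (one killed by $\tau(s^{-1}.f)=0$, the other by $\tau(1-f)=0$), and the same final summation. Your normalization $0\le f\le 1$ is in fact the right one (the paper's strict inequalities $0<f<1$ are a slip, being incompatible with $f\equiv 1$ on $\{x,y\}$ and $f\equiv 0$ on $\{sx,sy\}$).
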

An action $\Gamma\curvearrowright X$ is said to have \say{north pole-south pole}-dynamics, if for every infinite order element $g\in\Gamma$, there are unique fixed point $x_g^+$ and $x_g^-$ on the $\Gamma$-space $X$ such that $g^nx\xrightarrow[]{n\to\infty}x_g^+$ for all $x\ne x_g^{-}$. We denote by $E(g)=\text{Stab}_{\Gamma}(\{x_g^+, x_g^-\})$, the set wise stabilizer of  $\{x_g^+, x_g^-\}$. 
We denote by $\mathbb{E}_{E(g)}$ the canonical conditional expectation from $C_r^*(\Gamma)$ onto $C_r^*(E(g))$. This also extends to a normal trace-preserving conditional expectation from $L(\Gamma)$ onto $L(E(g))$.
\begin{prop}
\thlabel{plclosure}
Let $\Gamma$ be a discrete group admitting a minimal action $\Gamma\curvearrowright X$ with the north pole-south pole-dynamics. Let $s\in\Gamma$ be an infinite order element with the property that $t\{x_s^+,x_s^-\}\cap\{x_s^+,x_s^-\}=\emptyset$ for all $t\not\in E(s)$. Then, given $a\in C_r^*(\Gamma)$ and $\epsilon>0$, we can find $\{s_1,s_2,\ldots,s_m\}\subset\langle s \rangle$ such that
\[\left\|\frac{1}{m}\sum_{j=1}^m\lambda(s_j)\left(a-\mathbb{E}_{E(s)}(a)\right)\lambda(s_j)^*\right\|<\epsilon.\]
\end{prop}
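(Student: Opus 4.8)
The plan is to prove a Powers-type averaging estimate by exploiting the north pole--south pole dynamics along the cyclic group $\langle s\rangle$ together with the vanishing phenomenon of \thref{twodelta}. First I would reduce to the case $a=\lambda(t)$ for a single group element $t\notin E(s)$ (by linearity and density of the span of such $\lambda(t)$ in the relevant part of $C_r^*(\Gamma)$; note $\mathbb{E}_{E(s)}(\lambda(t))=0$ precisely when $t\notin E(s)$, while for $t\in E(s)$ the term $a-\mathbb{E}_{E(s)}(a)$ already vanishes). So it suffices to show: for every $t\notin E(s)$ and every $\epsilon>0$ there are $s_1,\dots,s_m\in\langle s\rangle$ with $\bigl\|\tfrac1m\sum_j \lambda(s_j)\lambda(t)\lambda(s_j)^*\bigr\|<\epsilon$, i.e. $\bigl\|\tfrac1m\sum_j \lambda(s_j t s_j^{-1})\bigr\|<\epsilon$.

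The key dynamical input is this: for $t\notin E(s)$ we have $t\{x_s^+,x_s^-\}\cap\{x_s^+,x_s^-\}=\emptyset$, and the conjugates $s^k t s^{-k}$ should eventually move $\{x_s^+,x_s^-\}$ off itself in a "uniform" way that forces the group elements $s^{k_1} t s^{-k_1}, s^{k_2} t s^{-k_2},\dots$ to be pairwise distinct and, more importantly, to satisfy the hypothesis of \thref{twodelta} against an appropriate limiting state. Concretely, I would consider a weak$^*$-cluster point $\psi$ of the states $\tfrac1m\sum_{j} \mathrm{ad}(\lambda(s_j))^*(\tr)$ on $C_r^*(\Gamma)$ restricted to $C(X)\rtimes_r\Gamma$ built over the Poisson-type realization $C(X)\to \ell^\infty(\Gamma)$, chosen so that $\psi|_{C(X)}$ is a convex combination $a\delta_{x_s^+}+(1-a)\delta_{x_s^-}$ coming from the two fixed points. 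For suitable choices $s_j=s^{k_j}$ with the $k_j$ spread out, \thref{twodelta} then yields $\psi(\lambda(s^{k} t s^{-k}))=0$ for all relevant $k$; expanding $\bigl\|\tfrac1m\sum_j\lambda(s_jts_j^{-1})\bigr\|^2$ via the trace and the almost-orthogonality of the distinct conjugates gives a bound of order $1/m$, hence $<\epsilon$ for $m$ large.

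More explicitly, the standard Powers trick: put $v_m=\tfrac1m\sum_{j=1}^m\lambda(s_jts_j^{-1})$, then $\|v_m\|^2=\|v_m^*v_m\|$ and $v_m^*v_m=\tfrac1{m^2}\sum_{i,j}\lambda(s_i t^{-1} s_i^{-1} s_j t s_j^{-1})$; the diagonal $i=j$ contributes $\tfrac1m\cdot 1$, and the off-diagonal terms are unitaries $\lambda(g_{ij})$ with $g_{ij}\ne e$, so $\|v_m^*v_m\|\le \tfrac1m + \bigl\|\tfrac1{m^2}\sum_{i\ne j}\lambda(g_{ij})\bigr\|$. To control the off-diagonal sum I would group it by value of $g_{ij}$ and use that, by the north pole--south pole dynamics, each fixed $g\ne e$ arises as $g_{ij}$ for only boundedly many pairs $(i,j)$ once the exponents $k_j$ are chosen lacunary enough (the conjugates $s^{k}ts^{-k}$ become "free-like" in the sense that $s^{k_i}t^{-1}s^{-k_i}s^{k_j}ts^{-k_j}=s^{k_i}t^{-1}s^{k_j-k_i}ts^{-k_j}$ determines $k_i,k_j$ up to finitely many choices when $|k_j-k_i|$ is large, because $t\notin E(s)$ means $t$ does not normalize any power of $s$ setwise on $\{x_s^\pm\}$). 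This gives an $O(1/m)$ bound on the whole expression.

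The main obstacle I anticipate is making the off-diagonal control rigorous: one must show that a lacunary sequence of exponents $k_1<k_2<\cdots$ can be chosen so that the products $s^{k_i}t^{-1}s^{-k_i}s^{k_j}ts^{-k_j}$ are pairwise distinct (and distinct from $e$) with multiplicity bounded independently of $m$. This is where the hypothesis $t\{x_s^+,x_s^-\}\cap\{x_s^+,x_s^-\}=\emptyset$ for $t\notin E(s)$ must be leveraged — presumably via \thref{twodelta} applied to a well-chosen state, or via a direct ping-pong argument on $X$ using that $s^{k}$ contracts everything outside $x_s^-$ toward $x_s^+$ — to guarantee that the conjugation orbit of $t$ under $\langle s\rangle$ generates enough "independence". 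If a clean combinatorial argument proves elusive, the fallback is the softer route: pass directly to a cluster state $\psi$, show $\psi$ vanishes on all the relevant off-diagonal group elements via \thref{twodelta}, and combine with the Akemann--Ostrand / Haagerup-style estimate that a finite average of such unitaries has small norm once the state-theoretic orthogonality is established; this avoids explicit multiplicity counting at the cost of a compactness argument.
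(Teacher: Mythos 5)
There is a genuine gap, and it sits at the heart of your main route: the estimate $\bigl\|\tfrac{1}{m^2}\sum_{i\neq j}\lambda(g_{ij})\bigr\| = O(1/m)$ does not follow from the $g_{ij}$ being distinct (or of bounded multiplicity). Bounded multiplicity controls only the trace $2$-norm of that sum; the \emph{operator} norm of a normalized average of $\sim m^2$ distinct unitaries need not be small at all (already $\tfrac1n\sum_{k=1}^n\lambda(h^k)$ in $C_r^*(\mathbb{Z})$ has norm $1$ for every $n$). Since the proposition asserts an operator-norm estimate in $C_r^*(\Gamma)$, the Powers trick as you set it up collapses: after the diagonal term you are left with a sum you cannot bound by counting. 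A genuine Powers-type proof would instead need a ping-pong partition of $X$ (or of $\ell^2(\Gamma)$) and the Akemann--Ostrand-type lemma on sums of operators with almost orthogonal ranges, which is a different estimate from the one you wrote. A secondary issue is the opening reduction to a single $\lambda(t)$: for a general $a$ supported on a finite set $F$ you must produce \emph{one} set $\{s_1,\dots,s_m\}$ that shrinks all $\lambda(t)$, $t\in F\setminus E(s)$, simultaneously; "linearity and density" alone do not give this.

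Your fallback sketch is much closer to what the paper actually does, but the step you lean on there is also not a valid inference: a limiting state $\psi$ vanishing on the relevant group elements does not by itself imply that finite averages of the corresponding unitaries have small norm. The paper's mechanism is Hahn--Banach separation applied to the norm-closed convex hull of $\{\lambda(g)(a-\mathbb{E}_{E(s)}(a))\lambda(g)^{*}:g\in\langle s\rangle\}$: one takes an \emph{arbitrary} bounded functional $\varphi$ on $C_r^*(\Gamma)$, extends it to $C(X)\rtimes_r\Gamma$, decomposes it into states, pushes these forward by $s^n$ so that (by north--south dynamics and dominated convergence) their restrictions to $C(X)$ converge to measures supported on $\{x_s^+,x_s^-\}$, invokes \thref{twodelta} to conclude the limit functional kills every $\lambda(t)$ with $t\notin E(s)$ and hence factors through $\mathbb{E}_{E(s)}$, and only then concludes that $0$ lies in the closed convex hull. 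So the dynamical input and the use of \thref{twodelta} in your proposal are the right ingredients, but you would need to replace both the multiplicity-counting estimate and the "state orthogonality implies small norm" step with the separation argument to obtain a correct proof.
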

Before we head on to the proof, let us briefly ponder our strategy, similar to that of \cite{haagerup2016new}. Let $\Lambda=\langle s \rangle$. We shall first show that for every bounded linear functional $\varphi$ on  $S(C_r^*(\Gamma))$, we can find a bounded linear functional $\psi\in \overline{\{s.\omega: s\in\Lambda\}}^{\text{weak}^*}$ such that $\psi=\psi\circ\mathbb{E}_{\Lambda}$. Here, $\mathbb{E}_{\Lambda}: C_r^*(\Gamma)\to C_r^*(\langle s \rangle)$ is the canonical conditional expectation. The claim would then follow by a usual Hahn-Banach separation argument. 
\begin{proof}
Let $\Gamma$ be a discrete group admitting a minimal action $\Gamma\curvearrowright X$ with the north pole-south pole dynamics. Since $\Gamma\curvearrowright X$ is minimal, we can view $C(X)$ as multiplication operators on $\mathbb{B}(\ell^2(\Gamma))$. 
Given a bounded linear functional $\varphi$ on $C_r^*(\Gamma)$, extend it to a bounded linear functional $\eta$ on $C(X)\rtimes_r\Gamma$. We can write $\eta=c_1\omega_1-c_2\omega_2+ic_3\omega_3-ic_4\omega_4$, where $\omega_i\in S(C(X)\rtimes_r\Gamma)$ and $c_i\in\mathbb{C}$ for each $i=1,2,3,4$. Let $\nu_i=\omega_i|_{C(X)}$ for each $i=1,2,3,4$. Since $s$ is an infinite order element, there are unique fixed points $x_s^+$ and $x_s^-$ on the $\Gamma$-space $X$ such that $s^nx\xrightarrow[]{n\to\infty}x_s^+$ for all $x\ne x_s^{-}$. Using the dominated convergence theorem, it follows that $s^n\nu_i\xrightarrow[]{\text{weak}^*}a_i\delta_{x_s^+}+(1-a_i)\delta_{x_s^{-}}$, where $a_i=\nu_i(X\setminus x_s^{-})$. By passing to a subnet (four times) if required, we can assume that $s^n\omega_i\to\omega_i'\in S(C(X)\rtimes_r\Gamma)$ for each $i=1,2,3,4$. Observe that $\omega_i'|_{C(X)}=a_i\delta_{x_s^+}+(1-a_i)\delta_{x_s^{-}}$. Let $\eta'=c_1\omega_1'-c_2\omega_2'+ic_3\omega_3'-ic_4\omega_4'$. 
Let $\psi=\eta'|_{C_r^*(\Gamma)}$. We claim that $\psi=\psi\circ\mathbb{E}_{E(s)}$. Note that $t\{x_s^+,x_s^-\}\cap \{x_s^+,x_s^-\}=\emptyset$ for all $t\not\in E(s)$. It now follows from \thref{twodelta} that $\omega_i'(\lambda(t))=0$ for all $t\not\in E(s)$ and for all $i=1,2,3,4$.
This shows that $\omega_i'|_{C_r^*(\Gamma)}=\omega_i'\circ\mathbb{E}_{E(s)}$ for each $i=1,2,3,4$. Consequently, it follows that $\psi=\psi\circ\mathbb{E}_{E(s)}$. The claim now follows by a usual Hahn-Banach separation argument (see, for example, \cite[Theorem~3.4]{bryder2018reduced}).
\end{proof}
It was shown in \cite{amrutam2020simplicity} that the averaging scheme at the level of the group $C^*$-algebra lifts to the same averaging scheme at the level of the crossed product. We merely reiterate the steps to prove that the averaging established in \thref{plclosure} lifts to the crossed product of tracial von Neumann algebras. Given a tracial von Neumann algebra $(\mathcal{N},\Tilde{\tau})$ and a trace preserving action $\Gamma\curvearrowright (\mathcal{N},\Tilde{\tau})$, we let $\tau=\tilde{\tau}\circ\mathbb{E}$ which is a faithful normal trace on $\mathcal{N}\rtimes\Gamma$. We do all the approximations in the $\|\cdot\|_2$-norm induced by $\tau$. We denote by $\mathbb{E}$, the canonical conditional expectation from $\mathcal{N}\rtimes\Gamma$ onto $\mathcal{N}$. Moreover, we shall use $\Tilde{\mathbb{E}}_{E(s)}$ to denote the canonical conditional expectation from $\mathcal{N}\rtimes\Gamma$ onto $\mathcal{N}\rtimes E(s)$.
\begin{theorem}
\thlabel{avgtracialvna}
Let $\Gamma$ be a discrete group admitting a minimal action $\Gamma\curvearrowright X$ with the north pole-south pole-dynamics. Let $s\in\Gamma$ be an infinite order element with the property that $t\{x_s^+,x_s^-\}\cap\{x_s^+,x_s^-\}=\emptyset$ for all $t\not\in E(s)$. 
Let $(\mathcal{N},\Tilde{\tau})$ be a tracial von Neumann algebra and $\Gamma\curvearrowright(\mathcal{N},\Tilde{\tau})$ be a trace-preserving action. Let $\mathcal{M}=\mathcal{N}\rtimes\Gamma$. Then, given $x\in\mathcal{M}$ and $\epsilon>0$, we can find $\{s_1,s_2,\ldots,s_m\}\subset\langle s \rangle$ such that
\[\left\|\frac{1}{m}\sum_{j=1}^m\lambda(s_j)\left(x-\Tilde{\mathbb{E}}_{E(s)}(x)\right)\lambda(s_j)^*\right\|_2<\epsilon.\]
\begin{proof}
Let $x\in \mathcal{M}$ and $\epsilon>0$ be given. We can find a finite set $F\subset\Gamma$ and finitely many elements $\{a_t: t\in F\}\subset\mathcal{N}$ such that 
\[\left\|x-\sum_{t\in F}a_t\lambda(t)\right\|_2<\frac{\epsilon}{3}.\]
Since $\mathbb{E}\circ\Tilde{\mathbb{E}}_{E(s)}=\mathbb{E}$, it follows that
\[\left\|\Tilde{\mathbb{E}}_{E(s)}(x)-\sum_{t\in E(s)\cap F}a_t\lambda(t)\right\|_2<\frac{\epsilon}{3}.\]
Let $M=\sup_{t\in F}\|a_t\|$. Using \thref{plclosure} for $a=\sum_{t\in E(s)^c\cap F}\lambda(t)$, we can find $\{s_1,s_2,\ldots,s_m\}\subset \langle s \rangle$ such that 
\[\left\|\frac{1}{m}\sum_{j=1}^m\lambda(s_j)\left(\sum_{t\in E(s)^c\cap F}\lambda(t)\right)\lambda(s_j)^*\right\|<\frac{\epsilon}{3|F|M}.\]
It follows from \cite[Lemma~4.1]{haagerup2016new} that
\[\left\|\frac{1}{m}\sum_{j=1}^m\lambda(s_j)\lambda(t)\lambda(s_j)^*\right\|_2\le \left\|\frac{1}{m}\sum_{j=1}^m\lambda(s_j)\lambda(t)\lambda(s_j)^*\right\|<\frac{\epsilon}{3|F|M},~\forall t\in E(s)^c\cap F.\]
Therefore, using \cite[Lemma~2.1]{amrutam2020simplicity}, we obtain that
\begin{align*}\left\|\frac{1}{m}\sum_{j=1}^m\lambda(s_j)\left(\sum_{t\in E(s)^c\cap F}a_t\lambda(t)\right)\lambda(s_j)^*\right\|&\le \sum_{t\in E(s)^c\cap F}\|a_t\|\left\|\frac{1}{m}\sum_{j=1}^m\lambda(s_j)\lambda(t)\lambda(s_j)^*\right\|\\&\le\sum_{t\in E(s)^c\cap F}\|a_t\|\frac{\epsilon}{3|F|M}<\frac{\epsilon}{3}.\end{align*}
Putting all these together, along with an application of triangle inequality, we see that
\begin{align*}
&\left\|\frac{1}{m}\sum_{j=1}^m\lambda(s_j)\left(x-\Tilde{\mathbb{E}}_{E(s)}(x)\right)\lambda(s_j)^*\right\|_2\\&\le\left\|\frac{1}{m}\sum_{j=1}^m\lambda(s_j)\left(x-\sum_{t\in F}a_t\lambda(t)\right)\lambda(s_j)^*\right\|_2\\&+\left\|\frac{1}{m}\sum_{j=1}^m\lambda(s_j)\left(\sum_{t\in F\cap E(s)}a_t\lambda(t)-\Tilde{\mathbb{E}}_{E(s)}(x)\right)\lambda(s_j)^*\right\|_2\\&+\left\|\frac{1}{m}\sum_{j=1}^m\lambda(s_j)\left(\sum_{t\in E(s)^c\cap F}a_t\lambda(t)\right)\lambda(s_j)^*\right\|_2\\&\le \left\|x-\sum_{t\in F}a_t\lambda(t)\right\|_2+\left\|
\sum_{t\in F\cap E(s)}a_t\lambda(t)-\Tilde{\mathbb{E}}_{E(s)}(x)\right\|_2+\left\|\frac{1}{m}\sum_{j=1}^m\lambda(s_j)\left(\sum_{t\in E(s)^c\cap F}a_t\lambda(t)\right)\lambda(s_j)^*\right\|\\&\le \frac{\epsilon}{3}+\frac{\epsilon}{3}+\frac{\epsilon}{3}=\epsilon.
\end{align*}
The claim follows.
\end{proof}
\end{theorem}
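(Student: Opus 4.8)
The plan is to transfer the reduced $C^*$-algebra averaging of \thref{plclosure} to the von Neumann crossed product $\mathcal M = \mathcal N \rtimes \Gamma$, following the template of \cite{amrutam2020simplicity}. Throughout, $\|\cdot\|_2$ is the norm induced by $\tau = \tilde\tau \circ \mathbb E$, and the central observation is that for \emph{any} choice of $s_1,\dots,s_m \in \langle s\rangle$ the averaging operator $T(z) = \frac1m\sum_{j=1}^m \lambda(s_j) z \lambda(s_j)^*$ is a convex combination of $\|\cdot\|_2$-isometries, hence a $\|\cdot\|_2$-contraction; likewise the conditional expectation $\tilde{\mathbb E}_{E(s)}$ is $\|\cdot\|_2$-contractive.

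First I would reduce to a finite Fourier sum: choose $x_0 = \sum_{t\in F} a_t\lambda(t)$ with $F\subset\Gamma$ finite, $a_t\in\mathcal N$, and $\|x - x_0\|_2 < \epsilon/3$. Since the Fourier coefficients of $\tilde{\mathbb E}_{E(s)}(y)$ are precisely those of $y$ supported on $E(s)$ (which follows from $\mathbb E\circ\tilde{\mathbb E}_{E(s)} = \mathbb E$ together with $\Gamma$-equivariance), one gets $\tilde{\mathbb E}_{E(s)}(x_0) = \sum_{t\in F\cap E(s)} a_t\lambda(t)$ and hence $\|\tilde{\mathbb E}_{E(s)}(x) - \sum_{t\in F\cap E(s)} a_t\lambda(t)\|_2 < \epsilon/3$. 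Consequently $x - \tilde{\mathbb E}_{E(s)}(x)$ is within $2\epsilon/3$ of $y_0 := \sum_{t\in F\cap E(s)^c} a_t\lambda(t)$ in $\|\cdot\|_2$, and by contractivity of $T$ it suffices to produce $s_1,\dots,s_m\in\langle s\rangle$ with $\|T(y_0)\|_2 < \epsilon/3$.

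To do this I would apply \thref{plclosure} to $a = \sum_{t\in F\cap E(s)^c}\lambda(t) \in C^*_r(\Gamma)$, which satisfies $\mathbb E_{E(s)}(a) = 0$ because each summand is indexed outside $E(s)$. With $M = \max_{t\in F}\|a_t\|$, this yields $s_1,\dots,s_m\in\langle s\rangle$ with $\big\|\frac1m\sum_j \lambda(s_j) a \lambda(s_j)^*\big\| < \frac{\epsilon}{3|F|M}$. By \cite[Lemma~4.1]{haagerup2016new} the same bound holds for each individual conjugated generator $\frac1m\sum_j\lambda(s_j)\lambda(t)\lambda(s_j)^*$, $t\in F\cap E(s)^c$, and then \cite[Lemma~2.1]{amrutam2020simplicity} — using $\lambda(s_j)a_t\lambda(t)\lambda(s_j)^* = \sigma_{s_j}(a_t)\lambda(s_jts_j^{-1})$ with $\|\sigma_{s_j}(a_t)\| = \|a_t\|$ — gives $\|T(y_0)\| \le \sum_{t\in F\cap E(s)^c}\|a_t\|\cdot\frac{\epsilon}{3|F|M} < \epsilon/3$, a fortiori in $\|\cdot\|_2$. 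Summing the three $\epsilon/3$ estimates via the triangle inequality closes the argument.

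The bookkeeping (Fourier approximation, contractivity of the conditional expectations and of the averaging maps) is routine; the entire mathematical weight sits in \thref{plclosure}. The only point needing care is the lift from the reduced $C^*$-algebra to the von Neumann crossed product: one must see that a single averaging scheme simultaneously controls all the finitely many Fourier coefficients of $x$ and that the $\mathcal N$-valued coefficients $a_t$ contribute only their operator norms — this is exactly the content of the two cited lemmas, so no genuinely new obstacle arises here.
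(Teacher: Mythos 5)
Your proposal is correct and follows essentially the same route as the paper: the same three-term $\epsilon/3$ decomposition via a finite Fourier approximation, the same application of \thref{plclosure} to $a=\sum_{t\in F\cap E(s)^c}\lambda(t)$, and the same two cited lemmas to pass from the scalar average to the $\mathcal N$-coefficient average. Your explicit remark that the averaging map is a $\|\cdot\|_2$-contraction (being a convex combination of $\|\cdot\|_2$-isometries) is the justification the paper uses implicitly in its final triangle-inequality chain, so nothing is missing on either side.
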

Consequently, we can determine the position of the relative commutants of specific subgroups of a non-elementary acylindrically-hyperbolic group with at least one infinite order element. We briefly recall the definitions and refer the readers to \cite{OS} for more details.
\subsection*{Acylindrically Hyperbolic groups} An action $\Gamma\curvearrowright (X,d)$ on a metrizable space is considered acylindrical if for every $\epsilon > 0$, there exist $\delta, N > 0$ such that for any $x, y \in X$ with $d(x, y) \geq \delta$, the number of elements $g \in \Gamma$ satisfying $d(x, gx) \leq \epsilon$ and $d(y, gy) \leq \epsilon$ is at most $N$. A group $\Gamma$ is called acylindrically hyperbolic if it admits a non-elementary acylindrical action on a hyperbolic space. 

Every non-elementary hyperbolic group is acylindrically hyperbolic. Further examples of acylindrically hyperbolic groups include non-(virtually) cyclic groups hyperbolic relative to proper subgroups, $\text{Out}(F_n)$ for $n > 1$, many mapping class groups, and non-(virtually cyclic) groups acting properly on proper CAT($0$)-spaces and containing rank one elements, among others (for more details, refer to \cite[Section~8]{OS} and the references therein).

For a group $\Gamma$ acting on a hyperbolic space $S$, recall that an infinite order element $g\in \Gamma$ is called loxodromic if it has precisely two fixed points $x_g^+,x_g^-$ on the Gromov boundary $\partial S$ and $g^nx\to x_g^+$ for every $x\in \partial S$ except $x_g^-$. It turns out that a group being acylindrically hyperbolic is equivalent to the notion of \say{weak proper discontinuity} introduced by Bestvina and Fujiwara \cite{bestvina2002bounded}. Let $\Gamma$ be a group acting on a hyperbolic space $S$. An element $g\in\Gamma$ is said to have the weak proper discontinuity property (in this case, we say that $g$ is a WPD element) if for every $\epsilon > 0$ and every $x\in S$, there exist $M \in\mathbb{N}$ such that the number of elements $h \in \Gamma$ satisfying $d(x, hx) <\epsilon$ and $d(g^Mx, hg^Mx) < \epsilon$ is finite. 

Osin \cite[Theorem~1.2]{OS} later established that a group $\Gamma$ being acylindrically hyperbolic is equivalent to the existence of a loxodromic element $g\in\Gamma$ that satisfies the weak proper discontinuity condition. Moreover, there is a unique maximal virtually cyclic subgroup $E(g)\le \Gamma$ containing $g$. Explicitly, $E(g)=\text{Stab}_{\Gamma}(\{x_g^+, x_g^-\})$ is the set wise stablizer of  $\{x_g^+, x_g^-\}$ (see for example, \cite[Lemma~6.5]{dahmani2017hyperbolically}).

\begin{cor}
\thlabel{relcommutantinhyp}
Let $\Gamma$ be a group admitting an action $\Gamma\curvearrowright X$ with north pole-south pole-dynamics. Let $\Lambda\le \Gamma$ be a subgroup with one infinite order element $s\in\Lambda$. Assume that $\{tx_s^+, tx_s^-\}\cap \{x_s^+,x_s^-\}=\emptyset$ for all $t\not\in \text{Stab}(\{x_s^+,x_s^-\})$. Let $(\mathcal{N},\Tilde{\tau})$ be a tracial von Neumann algebra and $\Gamma\curvearrowright(\mathcal{N},\Tilde{\tau})$ be a trace-preserving action. Let $\mathcal{M}=\mathcal{N}\rtimes\Gamma$. Then, $L(\Lambda)'\cap \mathcal{M}\subset \mathcal{N}\rtimes E(s)$.
\begin{proof}
Since $s$ is an infinite order WPD loxodromic element, it satisfies the north pole-south pole dynamics on the Gromov boundary.
Let $x\in L(\Lambda)'\cap \mathcal{M}$. Let $s\in\Lambda$ be an infinite order loxodromic element. Let $\epsilon>0$. Using \thref{avgtracialvna}, it follows that we can find $s_1,s_2,\ldots,s_m\subset\langle s \rangle$ such that
\begin{equation}
\label{eq:2norm}
\left\|\frac{1}{m}\sum_{j=1}^m\lambda(s_j)\left(x-\Tilde{\mathbb{E}}_{E(s)}(x)\right)\lambda(s_j)^*\right\|_2<\epsilon.\end{equation}
Note that here $\Tilde{\mathbb{E}}_{E(s)}:\mathcal{N}\rtimes\Gamma\to\mathcal{N}\rtimes E(s)$ is the canonical conditional expectation.
Let us write $\Tilde{\mathbb{E}}_{E(s)}(x)=\sum_{t\in E(s)}a_t\lambda(t)$, where the convergence is in the $\|\cdot\|_2$-norm. For any $s_j\in \langle s\rangle$, writing it as $s^{m_j}$ for some $m_j\in\mathbb{Z}$, we see that
\[\lambda(s_j)\Tilde{\mathbb{E}}_{E(s)}(x)\lambda(s_j)^*=\sum_{t\in E(s)}\alpha_{s_j}(a_t)\lambda(s_jts_j^{-1})=\sum_{t\in E(s)}\alpha_{s_j}(a_t)\lambda(s^{m_j}ts^{-m_j}).\]
Since $t\in E(s)=\{x_s^+, x_s^-\}$, we see that $s^{m_j}ts^{-m_j}\{x_s^+,x_s^-\}=\{x_s^+,x_s^-\}$. As such, we can now see that  $\lambda(s_j)\Tilde{\mathbb{E}}_{E(s)}(x)\lambda(s_j)^*\in \mathcal{N}\rtimes E(s)$ for each $j=1,2,\ldots,m$. Writing $$\sum_{j=1}^m\lambda(s_j)\Tilde{\mathbb{E}}_{E(s)}(x)\lambda(s_j)^*=y_{E(s)},$$ since $x\in L(\Lambda)'\cap \mathcal{M}$, it follows from equation~\eqref{eq:2norm} that
\[\left\|x-y_{E(s)}\right\|_2<\epsilon.\]
Since $\epsilon>0$ is arbitrary, it is evident that $x\in \mathcal{N}\rtimes E(s)$. Therefore, $L(\Lambda)'\cap \mathcal{M}\subset \mathcal{N}\rtimes E(s)$.
\end{proof}
\end{cor}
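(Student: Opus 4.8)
The plan is to deduce \thref{relcommutantinhyp} directly from the averaging theorem \thref{avgtracialvna}; once that averaging scheme is available for the element $s$, locating the relative commutant inside $\mathcal{N}\rtimes E(s)$ is a short $\|\cdot\|_2$-approximation argument, entirely in the spirit of the proof of \thref{tracialvna}, with the conditional expectation $\tilde{\mathbb{E}}_{E(s)}$ now playing the role of $\mathbb{E}_x$.

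First I would put the hypotheses of \thref{avgtracialvna} in place for $s\in\Lambda$. By assumption $s$ has infinite order and acts on $X$ with north pole-south pole dynamics, and the standing hypothesis says exactly that $t\{x_s^{+},x_s^{-}\}\cap\{x_s^{+},x_s^{-}\}=\emptyset$ for every $t\notin E(s)=\mathrm{Stab}_\Gamma(\{x_s^{+},x_s^{-}\})$. Since \thref{avgtracialvna} (and behind it \thref{plclosure} and \thref{twodelta}) is stated for a \emph{minimal} action, I would first reduce to a closed $\Gamma$-invariant minimal subspace containing $\{x_s^{+},x_s^{-}\}$ --- such a subspace is available in the geometric settings the corollary addresses (e.g.\ the limit set of a non-elementary action on a hyperbolic space, where $s$ is a WPD loxodromic with endpoints $x_s^{\pm}$), and restricting to it preserves both the north pole-south pole behaviour of $s$ and the disjointness condition. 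Then \thref{avgtracialvna} applies and yields, for any $y\in\mathcal{M}$ and $\epsilon>0$, elements $s_1,\dots,s_m\in\langle s\rangle$ with
\[
\Bigl\|\tfrac{1}{m}\sum_{j=1}^m\lambda(s_j)\bigl(y-\tilde{\mathbb{E}}_{E(s)}(y)\bigr)\lambda(s_j)^{*}\Bigr\|_2<\epsilon .
\]

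Next I would take $x\in L(\Lambda)'\cap\mathcal{M}$, fix $\epsilon>0$, and apply the above with $y=x$. Since each $s_j\in\langle s\rangle\subseteq\Lambda$, the unitary $\lambda(s_j)$ commutes with $x$, so $\tfrac{1}{m}\sum_j\lambda(s_j)x\lambda(s_j)^{*}=x$. On the other hand, expanding $\tilde{\mathbb{E}}_{E(s)}(x)=\sum_{t\in E(s)}a_t\lambda(t)$ with $a_t\in\mathcal{N}$ and the sum $\|\cdot\|_2$-convergent, and using that $E(s)$ is a subgroup containing every $s_j$, each term $\lambda(s_j)a_t\lambda(t)\lambda(s_j)^{*}=\alpha_{s_j}(a_t)\lambda(s_jts_j^{-1})$ still lies in $\mathcal{N}\rtimes E(s)$; since conjugation by a unitary is $\|\cdot\|_2$-isometric and $\mathcal{N}\rtimes E(s)$ is closed, $z:=\tfrac{1}{m}\sum_j\lambda(s_j)\tilde{\mathbb{E}}_{E(s)}(x)\lambda(s_j)^{*}\in\mathcal{N}\rtimes E(s)$. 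Subtracting, the displayed inequality becomes $\|x-z\|_2<\epsilon$; as $\epsilon$ was arbitrary and $\mathcal{N}\rtimes E(s)$ is $\|\cdot\|_2$-closed, $x\in\mathcal{N}\rtimes E(s)$, which gives $L(\Lambda)'\cap\mathcal{M}\subseteq\mathcal{N}\rtimes E(s)$.

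I do not anticipate a genuine obstacle here: the entire weight of the statement is carried by \thref{avgtracialvna}, and what is left is routine. The two points that need a little care are (i) the reduction to a minimal $\Gamma$-space so that \thref{avgtracialvna} is literally applicable, together with checking that the disjointness hypothesis assumed there coincides with the one assumed in the corollary; and (ii) the elementary observation that conjugating $\mathcal{N}\rtimes E(s)$ by $\lambda(s_j)$ with $s_j\in\langle s\rangle\subseteq E(s)$ lands back in $\mathcal{N}\rtimes E(s)$. Everything else is the standard passage from a Powers-type averaging property to the position of a relative commutant.
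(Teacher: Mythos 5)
Your proposal is correct and follows essentially the same route as the paper: apply \thref{avgtracialvna} to $x$, use that $\lambda(s_j)$ commutes with $x$ to recover $x$ from the average, and observe that conjugating $\tilde{\mathbb{E}}_{E(s)}(x)$ by $\lambda(s_j)$ with $s_j\in\langle s\rangle\subseteq E(s)$ stays in the $\|\cdot\|_2$-closed subalgebra $\mathcal{N}\rtimes E(s)$. Your extra remark about reducing to a minimal invariant subspace is a sensible patch for the fact that \thref{avgtracialvna} is stated for minimal actions while the corollary is not, a point the paper's own proof passes over silently.
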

In addition, if we assume that $\mathcal{N}$ is amenable, then it can be concluded that the relative commutant $L(\Lambda)'\cap \mathcal{M}$ is amenable. 

We now give examples that fit into the above setup. Before doing so, we briefly recall the notion of hyperbolic elements and refer the reader to \cite{ghys2013groupes} for more details. Let $\Gamma$ be a group acting by isometries on a hyperbolic space $X$. An element $s\in\Gamma$ is called hyperbolic if it fixes exactly two points on the boundary of $X$, denoted by $\partial\Gamma$.
\begin{example}
Let $\Gamma$ be a torsion-free hyperbolic group and $\Lambda\le \Gamma$ be an infinite subgroup. Then, $\Lambda$ contains an element of infinite order and is loxodromic. Let's call it $s$. Denote by $x_s^+$ and $x_s^-$, the corresponding fixed points on the Gromov boundary $\partial\Gamma$. It is well-known that $E(s)=\text{Stab}_{\Gamma}\left(\{x_s^+, x_s^-\}\right)$ (see for example \cite[Lemma~6.5]{dahmani2017hyperbolically}).
Since $s\in \text{Fix}(x_s^+)$ is a hyperbolic element, and $x_s^-$ is the unique fixed point of $x$ in $\partial\Gamma\setminus x_s^+$, it follows from the proof of \cite[Theorem~8.30]{ghys2013groupes} that $\text{Fix}(x_s^+)=\text{Fix}(x_s^-)$.
We claim that $\{tx_s^+, tx_s^-\}\cap \{x_s^+,x_s^-\}=\emptyset$ for all $t\not\in E(s)$. Let $t\not\in E(s)$. Since $\text{Fix}(x_s^+)=\text{Fix}(x_s^-)$, it follows that $tx_s^+\ne x_s^+$ and $tx_s^-\ne x_s^-$. If $tx_s^+=x_s^-$, then $t^{-1}stx_s^+=x_s^+$. Therefore, $t^{-1}st\in \text{Fix}(x_s^-)$. Therefore, we see that $t^{-1}stx_s^-=x_s^-$. This further implies that $s(tx_s^-)=(tx_s^-)$. Since $s$ is a loxodromic element, either $tx_s^-=x_s^-$ or $tx_s^-=x_s^+$. If $tx_s^-=x_s^-$, it would follow that $tx_s^+=x_s^-=tx_s^-$ from whence we would obtain that $x_s^+=x_s^-$ which would contradict the fact that $s$ is an infinite loxodromic element. Therefore, $tx_s^-=x_s^+$. This shows that $t\in E(s)$ which contradicts our earlier choice of $t\not\in E(s)$. If $tx_s^-=x_s^+$, the argument follows analogously by replacing $t^{-1}st$ with $tst^{-1}$. As such, we can now apply \thref{relcommutantinhyp} to conclude the relative commutant $L(\Lambda)'\cap \mathcal{N}\rtimes\Gamma$ is contained inside $\mathcal{N}\rtimes E(s)$ for any trace-preserving action $\Gamma\curvearrowright (\mathcal{N},\Tilde{\tau})$. Under the further assumption of amenability of $\mathcal{N}$, it follows that $L(\Lambda)'\cap \mathcal{N}\rtimes\Gamma$ is amenable since $E(s)$ is an amenable subgroup of $\Gamma$.
\end{example}

There are many acylindrically hyperbolic groups for which we can find an element $t\not\in E(s)$ such that $t\{x_s^+,x_s^-\}\cap\{x_s^+,x_s^-\} \ne\emptyset$. Nevertheless, we can still determine the position of the relative commutant of any diffuse von Neumann subalgebra of $L(\langle s\rangle) $ in these situations. Recall that for an action $\Gamma\curvearrowright X$ with north pole-south pole-dynamics, an element $s\in\Gamma$ is called parabolic if there exists a unique fixed point $x_s^+\in X$ such that both $\{s^nx\}_{n\in \mathbb{N}}$ and $\{s^{-n}x\}_{n\in \mathbb{N}}$ converge to  $x_s^+$ as $n\to \infty$ for every $x\in X$.
\begin{cor}
\thlabel{diffnorthsouth}
Let $\Gamma$ be a discrete group admitting an action $\Gamma\curvearrowright X$ with the north pole-south pole-dynamics. Let $s\in\Gamma$ be an infinite order parabolic element. Suppose there exists a quasi-invariant probability measure $\mu\in\text{Prob}(X)$ such that $\mu(x_s^+)\ne 0$.
Let $(\mathcal{N},\Tilde{\tau})$ be a tracial von Neumann algebra and $\Gamma\curvearrowright(\mathcal{N},\Tilde{\tau})$ be a trace-preserving action. Let $\mathcal{M}=\mathcal{N}\rtimes\Gamma$. Then, $\mathcal{M}_1'\cap\mathcal{M}\le \mathcal{N}\rtimes\Gamma_{x_s^+}$ for any diffuse subalgebra $\mathcal{M}_1\le L(\langle s\rangle)$.
\begin{proof}
Let $s\in\Gamma$ be an infinite order parabolic element. By assumption, there exists one fixed point $x_s^+$ on $X$. Moreover, $s^ny\xrightarrow{n\to\infty} x_s^+$ for all $y\in X$. Let $P_{\mu}:C(X)\to \mathbb{B}(\ell^2(\Gamma))$ be the associated Poisson-transformation. It follows from the definition~\ref{def:contractionforgroupelements} that $(s^n)$ is $\mu$ contracting towards $x_s^+$. In particular, every diverging sequence $(\lambda_n)\subset \Lambda=\langle s \rangle$ is $\mu$ contracting towards $x_s^+$. Let $\mathcal{M}_1\le L(\langle s\rangle)$ be a diffuse subalgebra. Let $(u_n)\subset\mathcal{U}(\mathcal{M}_1)$ be a sequence of unitaries which converge to $0$ weakly. We can now appeal to Lemma~\ref{lemb} to conclude that $u_n$ is $\mu$-contracting towards $x_s^+$. The claim now follows from \thref{tracialvna}.
\end{proof}
\end{cor}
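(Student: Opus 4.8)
The plan is to use the dynamics of the parabolic element $s$ to manufacture a single $\mu$-contracting sequence of unitaries lying inside $\mathcal{M}_1$, and then to invoke \thref{tracialvna} directly. All of the analytic content has already been packaged into Section~\ref{sec:prelim} and \thref{tracialvna}; here one only has to feed in the right sequence and then pass from the commutant of that sequence to the commutant of $\mathcal{M}_1$.

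First I would extract the dynamical input. By definition of parabolicity for an action with north pole-south pole dynamics, $s$ has a single fixed point $x_s^+$ and $s^n y\to x_s^+$, $s^{-n}y\to x_s^+$ for \emph{every} $y\in X$. Hence any diverging sequence $(\lambda_k)\subset\Lambda:=\langle s\rangle$ (equivalently $\lambda_k=s^{n_k}$ with $|n_k|\to\infty$) satisfies $\lambda_k y\to x_s^+$ for all $y$, in particular $\mu$-almost everywhere; so every diverging sequence in $\Lambda$ is $\mu$-pointwise-contracting towards $x_s^+$ in the sense of Definition~\ref{def:contractionforgroupelements}. Together with the $\Gamma$-quasi-invariance of $\mu$ (part of the hypothesis, alongside $\mu(x_s^+)\neq 0$), this is exactly the input required by Lemma~\ref{lemb}.

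Next I would produce the unitaries. Since $\mathcal{M}_1\leq L(\Lambda)$ is diffuse, it contains a copy of $L^\infty([0,1],\mathrm{Leb})$; letting $u_k$ be the image of $t\mapsto e^{2\pi i k t}$ yields a sequence in $\mathcal{U}(\mathcal{M}_1)$ with $u_k\to 0$ weakly — Riemann--Lebesgue applied to the conditional expectation onto $L^\infty[0,1]$ shows all Fourier coefficients of $u_k$ in $L(\Gamma)$ tend to zero, hence $u_k\to 0$ in the weak operator topology. Lemma~\ref{lemb} then applies verbatim and gives that $(u_k)\subset L\Lambda$ is $\mu$-contracting towards $x_s^+$.

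Finally, applying \thref{tracialvna} to the trace-preserving action $\Gamma\curvearrowright(\mathcal{N},\tilde\tau)$ with this sequence yields $\{u_k\}'\cap(\mathcal{N}\rtimes\Gamma)\subseteq\mathcal{N}\rtimes\Gamma_{x_s^+}$. Since each $u_k$ lies in $\mathcal{M}_1$, every element of $\mathcal{M}_1'\cap\mathcal{M}$ commutes with all $u_k$, so $\mathcal{M}_1'\cap\mathcal{M}\subseteq\{u_k\}'\cap\mathcal{M}\subseteq\mathcal{N}\rtimes\Gamma_{x_s^+}$, which is the assertion. I do not expect a genuine obstacle here; the two points deserving a little care are verifying that parabolicity really forces \emph{every} orbit, not merely $\mu$-almost every orbit, to converge to $x_s^+$ so that Definition~\ref{def:contractionforgroupelements} is literally satisfied and Lemma~\ref{lemb} can be cited, and the routine-but-not-one-line construction of a weakly null sequence of unitaries in a diffuse von Neumann algebra; everything else is bookkeeping on top of \thref{tracialvna}.
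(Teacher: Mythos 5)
Your proposal is correct and follows essentially the same route as the paper: parabolicity gives that every diverging sequence in $\langle s\rangle$ is $\mu$-pointwise-contracting towards $x_s^+$, Lemma~\ref{lemb} upgrades this to $\mu$-contraction for any weakly null sequence of unitaries in the diffuse subalgebra $\mathcal{M}_1$, and \thref{tracialvna} together with $\mathcal{M}_1'\cap\mathcal{M}\subseteq\{u_k\}'\cap\mathcal{M}$ finishes the argument. The only difference is that you spell out the construction of the weakly null unitaries and the final commutant inclusion, which the paper leaves implicit.
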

Recall that a CAT$(0)$-cube complex is a simply connected cell complex whose cells are Euclidean cubes $[0,1]^d$ of various dimensions. We refer the readers to \cite{caprace2011rank}, \cite{bridson2013metric}, and \cite{charney2007introduction} for more details on these. One can assign many compact Hausdorff boundaries to a CAT$(0)$-cube complex (see, for example \cite[Section~1.3]{nevo2013poisson}). For our purposes, given a CAT$(0)$ metric space, we consider the action on the visual boundary $\partial X$ (see \cite[Chapter~8]{bridson2013metric}) equipped with the cone-topology. If $X$ is Gromov hyperbolic, then $\partial X$ is the classical Gromov boundary of $X$.

Let $\Gamma$ be a countable discrete group acting on a proper CAT$(0)$ cube complex $X$ (not necessarily hyperbolic) by isometries. We say that the action is elementary if the limit set $LX$ (the set of accumulation points in $\partial X$ of an orbit of the action) consists of at most two points or if $\Gamma$ fixes a point on $\partial X$.
\begin{example}[CAT$(0)$-cube complexes]
Let $\Gamma$ be a countable discrete group acting on a proper CAT$(0)$ cube complex $X$ (not necessarily hyperbolic) by isometries in a non-elementary way. Let $s\in\Gamma$ be rank-one isometry. It is well-known that any rank-one isometry $g\in\text{Isom}(X)$ has the north pole-south pole-dynamics (see, for example, \cite[Lemma~4.4]{hamenstadt2009rank}). Using \cite[Theorem~1.1]{hamenstadt2009rank}, we see that the limit set $LX\subset\partial X$ is perfect. It follows from \cite[Lemma~2.1]{boutonnet2021properly} that there is a non-atomic measure $\mu\in\text{Prob}(\partial X)$. Since $s$ is a rank-one isometry, there are two fixed points $x_s^+$ and $x_s^-$ on the visual boundary $\partial X$. Moreover, $s^ny\xrightarrow{n\to\infty} x_s^+$ for all $y\ne x_s^-\in \partial X$. Since $\mu$ is non-atomic, we see that $\mu(x_s^-)=0$. Now, let $(\mathcal{N},\Tilde{\tau})$ be a tracial von Neumann algebra, and $\Gamma\curvearrowright(\mathcal{N},\Tilde{\tau})$, a trace-preserving action. Setting $\mathcal{M}=\mathcal{N}\rtimes\Gamma$, it follows from \thref{tracialvna} that $L\left(\langle s \rangle\right)'\cap\mathcal{M}\le \mathcal{N}\rtimes\Gamma_{x_s^+}$. If we further assume that $\mathcal{N}$ is amenable, then in this case, since $\Gamma_{x_s^+}$ is amenable (see the argument in the last paragraph of \cite[Lemma~5.6]{ma2022boundary}), we conclude that $L\left(\langle s \rangle\right)'\cap\mathcal{M}$ is amenable. 
\end{example}
It is not difficult to find actions $\Gamma\curvearrowright X$ on CAT$(0)$-cube complexes $X$ which are non-elementary. For example, if $|\partial X|>2$ and $\Gamma\curvearrowright X$ cocompactly by isometries, then the action is necessarily non-elementary (see for example \cite{ballmann1995orbihedra}). 
\section{The case of \texorpdfstring{$\SL(d,\mathbb{\mathbb{Z}})$}{}}\label{sl}
This section applies our results to the von Neumann algebras associated with infinite subgroups of $\SL(d, \mathbb{Z})$, $d \geq 2$. We show that for each such subgroup $\Lambda$, the relative commutant of $L\Lambda$ is always contained in the von Neumann algebra of the intersection of some parabolic subgroup with $\SL(d,\mathbb{Z})$. In the case $d=3$ or if the subgroup is Zariski dense in $\SL(d,\mathbb{R})$, such parabolic subgroups are always Borel groups.
\begin{prop}\label{propa} Let $d \in \mathbb{N}$ and $\Gamma$ be an infinite subgroup of $\SL(d,\mathbb{Z})$. Then there is a parabolic subgroup $P$ of $\SL(d,\mathbb{R})$ such that $L\Gamma' \cap L \SL(d,\mathbb{Z}) \subset L(\SL (d,\mathbb{Z}) \cap P)$.
\end{prop}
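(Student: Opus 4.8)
The plan is to apply \thref{relativecommutantforgroupvna} with the acting group taken to be all of $\SL(d,\bZ)$. For this it suffices to produce a sequence $(u_n)$ of unitaries in $L\Gamma\subset L\SL(d,\bZ)$ that is $\mu$-contracting towards a point $\xi$ lying in a compact $\SL(d,\bZ)$-space $X$ carrying a quasi-invariant probability measure $\mu$, in such a way that the $\SL(d,\bR)$-stabiliser $P:=\stab_{\SL(d,\bR)}(\xi)$ is a parabolic subgroup. Indeed, \thref{relativecommutantforgroupvna} then gives $\{u_n\}'\cap L\SL(d,\bZ)\subset L(\SL(d,\bZ)_\xi)$; since $u_n\in L\Gamma$ we have $L\Gamma'\subset\{u_n\}'$, and since $\SL(d,\bZ)_\xi=\{s\in\SL(d,\bZ):s\xi=\xi\}=\SL(d,\bZ)\cap P$, this yields $L\Gamma'\cap L\SL(d,\bZ)\subset L(\SL(d,\bZ)\cap P)$, which is the assertion.

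To construct these data, note first that $\Gamma$, being an infinite discrete subgroup of the Lie group $\SL(d,\bR)$, is unbounded; fix $(g_n)\subset\Gamma$ with $\|g_n\|\to\infty$ and take a Cartan ($KAK$) decomposition $g_n=k_na_nl_n$ with $k_n,l_n\in\mathrm{SO}(d)$ and $a_n=\mathrm{diag}(\sigma_1(g_n),\dots,\sigma_d(g_n))$, $\sigma_1(g_n)\ge\cdots\ge\sigma_d(g_n)>0$. Passing to a subsequence I may assume $k_n\to k$, $l_n\to l$ in $\mathrm{SO}(d)$ and that each ratio $\sigma_i(g_n)/\sigma_{i+1}(g_n)$ converges in $[1,\infty]$; since $\det g_n=1$ and $\|g_n\|\to\infty$ we have $\sigma_1(g_n)/\sigma_d(g_n)\to\infty$, hence $\sigma_j(g_n)/\sigma_{j+1}(g_n)\to\infty$ for some $j\in\{1,\dots,d-1\}$. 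Take $X:=\mathrm{Gr}(j,d)$, the Grassmannian of $j$-planes in $\bR^d$, a compact manifold on which $\SL(d,\bZ)$ acts; let $\mu$ be the $\mathrm{SO}(d)$-invariant probability measure on $X$, which lies in the Lebesgue measure class and is therefore $\SL(d,\bZ)$-quasi-invariant; and put $\xi:=k\langle e_1,\dots,e_j\rangle\in X$, whose stabiliser $P=\stab_{\SL(d,\bR)}(\xi)$ is the maximal parabolic of $\SL(d,\bR)$ fixing this $j$-plane.

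It remains to see that $(u_n)$ can be taken in $L\Gamma$, and this rests on the standard ``north--south'' behaviour: $(g_n)$ is $\mu$-pointwise-contracting towards $\xi$ in the sense of Definition~\ref{def:contractionforgroupelements}. Indeed, if $W\in X$ is such that $lW$ is transverse to $\langle e_{j+1},\dots,e_d\rangle$, then writing $lW$ as the graph of a linear map $\langle e_1,\dots,e_j\rangle\to\langle e_{j+1},\dots,e_d\rangle$, the matrix $a_n$ rescales this graph's slope by a factor of order $\sigma_{j+1}(g_n)/\sigma_j(g_n)\to 0$, so $a_nl_nW\to\langle e_1,\dots,e_j\rangle$ and hence $g_nW=k_na_nl_nW\to k\langle e_1,\dots,e_j\rangle=\xi$; the exceptional $W$ (those for which $lW$ is not transverse) form a proper Zariski-closed, hence $\mu$-null, subset of $X$. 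Unwinding Definition~\ref{def:contraction} for a sequence of group elements --- which, by Lemma~\ref{lem1} and the quasi-invariance of $\mu$, amounts precisely to pointwise contraction of the inverse sequence --- it follows that $u_n:=\lambda(g_n^{-1})\in L\Gamma\subset L\SL(d,\bZ)$ is $\mu$-contracting towards $\xi$. I expect this dynamical input --- that an arbitrary unbounded sequence in $\SL(d,\bR)$ has, along a subsequence, north--south dynamics on some Grassmannian with respect to a quasi-invariant measure --- to be the only substantial point; it is classical (it is the proximality of the $\SL(d,\bR)$-action on its flag varieties), but does require the Cartan decomposition and the bookkeeping of singular-value gaps, whereas the reduction of the first paragraph and the identification of subspace stabilisers with parabolics are routine.
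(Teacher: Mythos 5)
Your proof is correct, and at its core it follows the same route as the paper: extract an unbounded sequence from $\Gamma$, take a Cartan ($KAK$) decomposition, pass to a subsequence where the singular-value ratios and the compact parts converge, and exhibit almost-sure contraction on a flag-type variety with respect to a Lebesgue-class quasi-invariant measure, then feed the resulting $\mu$-contracting unitaries into \thref{relativecommutantforgroupvna}. The differences are in the execution. The paper works on the partial flag variety $G/P$ where $P$ is the parabolic adapted to the \emph{full} partition of $\{1,\dots,d\}$ determined by which ratios $\lambda_i^{(n)}/\lambda_{i+1}^{(n)}$ stay bounded, and proves a.e.\ convergence via the Gaussian-elimination decomposition of a dense open set as (lower unitriangular)$\cdot P$; you instead pick a single index $j$ with $\sigma_j/\sigma_{j+1}\to\infty$ and work on the Grassmannian $\mathrm{Gr}(j,d)$, proving convergence by the elementary graph/slope computation. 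Your argument is more self-contained, and you are more explicit than the paper about the final bookkeeping (that pointwise contraction of $(g_n)$ makes $\lambda(g_n^{-1})$ a $\mu$-contracting sequence in the sense of Definition~\ref{def:contraction}, via Lemma~\ref{lem1}). The price is that your parabolic is a maximal one containing the paper's partition-parabolic, so you prove the proposition as stated but with a coarser $P$; the finer choice is what the paper later exploits to land in a Borel subgroup for $d=3$ and for Zariski-dense subgroups.
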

\begin{proof} Let $G = \SL(d,\mathbb{R})$. We want to show that given a diverging sequence $(\gamma_n)$ in $\SL(d,\mathbb{Z})$ there are a parabolic subgroup $P$ of $G$, an $\SL(d,\mathbb{Z})$-quasi-invariant probability measure on $G/P$ and a point $y \in G/P$ such that, up to taking a subsequence, for $\mu$-almost every point $x$ in $G/P$ we have $\lim_n \gamma_n x= y$.\\ Let then $(\gamma_n)$ be such a sequence and write $\gamma_n =k_n a_n k_n'$ ($KAK$ decomposition in $\SL(d,\mathbb{R})$), in such a way that the diagonal entries $(\lambda_i^{(n)})$ of $a_n$ are taken in decreasing order: $\lambda_i(n) \geq \lambda_{i+1}^{(n)}$ for every $n$, for every $i =1,...,d$. Up to taking a subsequence we can suppose that $\lambda_i^{(n)} /\lambda_{i+1}^{(n)}$ converges to a point in $(0,\infty]$ for every $i=1,...,d-1$, and $k_n \rightarrow k$, $k_n' \rightarrow k'$ in $K$. We consider the partition of $\{1,...,d\}$ into $I_1,...,I_l$ subsets (for some $l \in \mathbb{N}$) defined by the condition that $i$ and $i+j$ belong to the same set $I_m$ if and only if $\lambda_i^{(n)}/\lambda_{i+j}^{(n)}$ converges to a finite number. Then we consider the parabolic subgroup $P$ associated with this partition, i.e., the one given by matrices in $\SL(d,\mathbb{R})$ of the form 
\begin{equation*}
\left(\begin{array}{ccccc}
 GL_{|I_1|}(\mathbb{R}) & * & * & ...& *\\
0 & GL_{|I_2|} (\mathbb{R}) & * & &\vdots\\
\vdots & \vdots & \vdots & \ddots&\vdots\\
0 & 0 & ...& 0 & GL_{|I_l|} (\mathbb{R})
\end{array}\right).
\end{equation*}
Let now $A= \{ g \in \SL(d,\mathbb{R}) \; | \; \det (g_i) \neq 0 \; \forall i=1,...,d-1\}$, where $g_i$ is the $i$-th principal minor of $g$. $A$ is a dense open subset of $G$. It follows from Gaussian elimination that every element of $A$ can be written as a product of an element of the group $T$ of strictly lower triangular matrices (i.e., the ones having only $1$'s on the diagonal) and an element from the Borel subgroup $B$ of upper triangular matrices (cf. the proof of \cite{Zi} Lemma 5.1.4). The map $T \rightarrow A/P$ is continuous and surjective; it restricts to a continuous surjective map $T \backslash \{T \cap P\} \rightarrow A/P \backslash \{eP\}$. Let then $x \in T \backslash \{T \cap P\}$ and write it as $x=(X_{i,j})_{i,j=1}^l$, where $X_{i,j}$ is a matrix of size $|I_i| \times |I_j|$; in the same way we write $a_n = (\Lambda_{i,j})_{i,j=1}^l$. Define the sequence in $P$ given by $h_n=(H_{i,j})_{i,j=1}^l$, where $H_{i,j} = \delta_{i,j} (\Lambda_{i,i} X_{i,i})^{-1}$. Then $a_n x h_n  \rightarrow e$ and so $a_n xP \rightarrow eP$. Let now $yP \in A/P$ and $C \subset A/P$ be a compact neighborhood of $yP$. Let $U$ be an open subset around $eP$ with an empty intersection with $C$. For every $xP \in C$ there is $n_{xP}$ such that $a_{n_xP} xP \in U$. Hence the open sets $a_{n_{xP}}^{-1} U$ cover $C$. It follows that the sequence $k^{-1} k_n a_n k_n' (k')^{-1} xP$ converges to $eP$. Hence, the result follows by choosing any $\SL(d,\mathbb{Z})$-quasi-invariant probability measure on $G/P$ which gives zero mass to $G/P \backslash A/P$. \end{proof}

\begin{prop}
    Let $n \geq 2$ and let $\Gamma= SL(n,\mathbb{Z})$. Let also $\mu$ be the $K$-invariant probability measure on the complete $n$-dimensional flag variety. Then, every subgroup of $\Gamma$, which is Zariski dense in $SL(n,\mathbb{R})$, contains a $\mu$-contracting sequence.
\end{prop}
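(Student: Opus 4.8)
The plan is to reduce the statement to the existence of a single proximal (``loxodromic'') element inside the Zariski dense subgroup, and then let its powers collapse almost all of the flag variety onto a point. Write $G=\SL(n,\mathbb{R})$, let $K=\mathrm{SO}(n)$, let $B\le G$ be the Borel subgroup of upper triangular matrices as in the proof of Proposition~\ref{propa}, and let $\mathcal{F}=G/B$ be the complete flag variety, with $\mu$ its unique $K$-invariant probability measure. Since $\mathcal{F}$ is a compact smooth manifold and $\mu$ is a smooth positive density, $\mu$ lies in the Lebesgue measure class; in particular $\mu$ is $G$-quasi-invariant, hence $\SL(n,\mathbb{Z})$-quasi-invariant, and $\mu$ assigns mass $0$ to every proper Zariski-closed subset of $\mathcal{F}$. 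These two facts are all we shall need about $\mu$.

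The only substantial input is the first step: by a theorem of Benoist (building on work of Goldsheid--Margulis and of Prasad on $\mathbb{R}$-regular elements), every Zariski dense subgroup $\Lambda\le G$ contains a \emph{proximal} element $g$, that is, one whose $n$ eigenvalues have pairwise distinct absolute values $|\lambda_1|>\cdots>|\lambda_n|$. For such an element the action on $\mathcal{F}$ has ``north--south'' dynamics: there are a unique attracting flag $\xi^{+}$ (the flag $\langle v_1\rangle\subset\langle v_1,v_2\rangle\subset\cdots$ assembled from eigenvectors $v_i$ of $g$ listed by decreasing modulus of the eigenvalue) and a unique repelling flag $\xi^{-}$, and $g^{k}\xi\to\xi^{+}$ for every flag $\xi$ transverse to $\xi^{-}$, uniformly on compact subsets of the transverse locus. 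Equivalently, since the Cartan projection of $g^{k}$ is asymptotic to $k$ times the Jordan projection of $g$, one has $\tfrac1k\log\sigma_i(g^{k})\to\log|\lambda_i|$, so in the notation of the proof of Proposition~\ref{propa} the partition of $\{1,\dots,n\}$ attached to the sequence $(g^{k})_{k}$ is the finest one and the associated parabolic is $B$ itself; the argument of that proof then already produces the convergence we want.

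It remains to translate this into a $\mu$-contracting sequence. The complement of the transverse locus, namely $\{\xi\in\mathcal{F}:\xi\text{ not transverse to }\xi^{-}\}$, is a proper Zariski-closed (Schubert) subset of the irreducible variety $\mathcal{F}$, hence $\mu$-null by the first paragraph. Consequently $g^{k}\xi\to\xi^{+}$ for $\mu$-almost every $\xi\in\mathcal{F}$, i.e.\ the sequence $(g^{k})_{k}\subset\Lambda$ is $\mu$-pointwise-contracting towards $\xi^{+}$ in the sense of Definition~\ref{def:contractionforgroupelements}. Finally, as noted after Definition~\ref{def:contractionforgroupelements} and exactly as in Lemma~\ref{lemb}, a $\mu$-pointwise-contracting sequence of group elements gives a $\mu$-contracting sequence of unitaries: for a group element $t$ the number $\|\lambda(t)|_{A}\|_2$ equals the indicator of the event $t\in A$, and $\mu$-quasi-invariance together with dominated convergence (as in Lemma~\ref{lem1}) forces $g^{k}\in A$ for all large $k$, for every finite $F\subset\Gamma$, every $f\in C(X)$ and every $\epsilon>0$. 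Thus $u_k:=\lambda(g^{k})\in L\Lambda\subseteq L(\SL(n,\mathbb{Z}))$ is the desired $\mu$-contracting sequence (towards $\xi^{+}$); if the orientation of the translates entering Definition~\ref{def:contraction} demands it, one uses $(g^{-k})_{k}$ instead, which is proximal towards $\xi^{-}$.

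The main obstacle is thus concentrated in the single step of exhibiting a proximal element in an arbitrary Zariski dense subgroup; this is the one place Zariski density is genuinely used, and without it (e.g.\ for a merely infinite subgroup) Proposition~\ref{propa} already shows one can in general only reach $G/P$ for a possibly proper parabolic $P$. Everything else is routine: the north--south dynamics of a proximal element and the nullity of Schubert subvarieties for a smooth measure are classical, and the last step is purely formal.
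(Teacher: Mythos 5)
Your argument is correct, but it runs on a different engine than the paper's. The paper deduces the existence of a contracting sequence from Goldsheid--Margulis: it invokes their Theorem~3.6 to give the Zariski dense subgroup $\Lambda$ the ``contraction property'' on the flag variety, then their Lemma~3.9 together with the $KAK$-decomposition argument already carried out in the proof of Proposition~\ref{propa} to convert a divergent sequence with regular Cartan projection into a $\mu$-pointwise-contracting one. You instead concentrate everything into a single element: the Prasad/Benoist--Labourie theorem that a Zariski dense subgroup of $\SL(n,\mathbb{R})$ contains an $\mathbb{R}$-regular (proximal) element $g$, whose powers have north--south dynamics on $G/B$ off a proper Schubert subvariety, which is $\mu$-null because the $K$-invariant measure is in the Lebesgue class. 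This buys you a cleaner dynamical picture (no need to re-run the $KAK$ argument or pass to subsequences) at the price of a different, comparably deep, input theorem; the paper's route is more uniform with Proposition~\ref{propa}, which handles arbitrary infinite subgroups and arbitrary divergent sequences rather than powers of one element. Two small points: the reduction of Definition~\ref{def:contraction} to pointwise contraction for sequences of group elements is exactly the content of Lemma~\ref{lem1} plus the computation in Lemma~\ref{lemb}, as you say; and your parenthetical about orientation is actually needed, since the set $A$ in Definition~\ref{def:contraction} involves $\gamma^{-1}\eta\mu$, so $\lambda(g^{k})$ is $\mu$-contracting towards the repelling flag $\xi^{-}$ of $g$ rather than $\xi^{+}$ --- harmless here, as the proposition only asserts existence of some $\mu$-contracting sequence.
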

\begin{proof}
    If $\Lambda$ is a Zariski dense subgroup of $\Gamma$, then we can apply the procedure in \cite[Theorem~3.6]{GoMa} (since the action of $\SL(n,\mathbb{Z})$ on the complete flag variety is transitive) to deduce that $\Lambda$ has the contraction property (as defined in \cite[Definition~3.1]{GoMa}. The result follows from \cite[Lemma~3.9]{GoMa} and the proof of Proposition \ref{propa}.\end{proof} 
\begin{cor}
    Let $\Lambda \subset \SL(d,\mathbb{\mathbb{Z}})$ be a Zariski-dense subgroup of $\SL(d,\mathbb{R})$. Then $L\Lambda' \cap L\SL(d,\mathbb{Z}) \subset L \Gamma_x$ for some $x \in \SL(d,\mathbb{R})/B_d$ (which is injective), where $B_d$ is the Borel subgroup of upper-triangular matrices in $\SL(d,\mathbb{R})$. In particular, this applies to infinite commensurated subgroups of $\SL(d,\mathbb{Z})$.
\end{cor}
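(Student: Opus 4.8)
The plan is to assemble the corollary from the proposition immediately preceding it — which produces a $\mu$-contracting sequence of group elements inside any subgroup of $\SL(d,\mathbb{Z})$ that is Zariski dense in $\SL(d,\mathbb{R})$ — together with \thref{relativecommutantforgroupvna}, and then to reduce the commensurated case to the Zariski-dense one. First suppose $\Lambda \subset \SL(d,\mathbb{Z})$ is Zariski dense in $\SL(d,\mathbb{R})$; let $X = \SL(d,\mathbb{R})/B_d$ be the complete flag variety, a compact Hausdorff $\SL(d,\mathbb{Z})$-space, and let $\mu$ be its $K$-invariant probability measure. By that proposition, $\Lambda$ contains a sequence $(\gamma_n)$ of group elements which is $\mu$-contracting towards some $x \in X$; set $u_n = \lambda(\gamma_n) \in L\Lambda \subseteq L\SL(d,\mathbb{Z})$. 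Since every $u_n$ lies in $L\Lambda$ we have $L\Lambda' \subseteq \{u_n : n \in \mathbb{N}\}'$, and hence
\[ L\Lambda' \cap L\SL(d,\mathbb{Z}) \;\subseteq\; \{u_n\}' \cap L\SL(d,\mathbb{Z}) \;\subseteq\; L\bigl(\SL(d,\mathbb{Z})_x\bigr) \]
by \thref{relativecommutantforgroupvna}, where $\SL(d,\mathbb{Z})_x = \{s \in \SL(d,\mathbb{Z}) : sx = x\}$. The stabilizer of the complete flag $x$ in $\SL(d,\mathbb{R})$ is a Borel subgroup, i.e.\ a conjugate $gB_dg^{-1}$, so $\SL(d,\mathbb{Z})_x = \SL(d,\mathbb{Z}) \cap gB_dg^{-1}$ is contained in a solvable, hence amenable, group; thus $\SL(d,\mathbb{Z})_x$ is amenable and $L(\SL(d,\mathbb{Z})_x)$ is injective. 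This is the first assertion.

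For the final sentence it suffices to check that an infinite commensurated subgroup $\Lambda \leq \SL(d,\mathbb{Z})$ is automatically Zariski dense in $\SL(d,\mathbb{R})$. Let $H$ be the Zariski closure of $\Lambda$ and $H^\circ$ its identity component. For every $\gamma \in \SL(d,\mathbb{Z})$ the subgroup $\gamma\Lambda\gamma^{-1}$ is commensurable with $\Lambda$, and commensurable linear subgroups have Zariski closures with the same identity component; since conjugation by $\gamma$ is an algebraic automorphism this yields $\gamma H^\circ \gamma^{-1} = (\gamma H \gamma^{-1})^\circ = H^\circ$. Hence $H^\circ$ is normalized by $\SL(d,\mathbb{Z})$, therefore also by its Zariski closure $\SL(d,\mathbb{R})$, so $H^\circ$ is a connected normal algebraic subgroup of the almost simple group $\SL(d,\mathbb{R})$ and is thus trivial or everything. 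If $H^\circ$ were trivial then $H$, and hence $\Lambda$, would be finite, contrary to assumption; so $H = \SL(d,\mathbb{R})$, $\Lambda$ is Zariski dense, and the first part applies.

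The argument is essentially an assembly of earlier results, so I do not expect a serious obstacle. The only steps that are not purely formal are the standard identification of $\stab_{\SL(d,\mathbb{R})}(x)$ with a conjugate of the Borel $B_d$ — needed to conclude injectivity of $L(\SL(d,\mathbb{Z})_x)$ — and the reduction step for commensurated subgroups, in particular the fact that commensurable linear groups have Zariski closures with equal identity components; this last point is the one I would write out with some care, everything else being an immediate application of \thref{relativecommutantforgroupvna} and the preceding proposition.
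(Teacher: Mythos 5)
Your proof is correct and takes essentially the same route as the paper: the paper's own proof is the one-line ``Follows from Theorem~\ref{tracialvna} and [BaFu, Lemma~7.5]'', i.e.\ it combines the preceding proposition (a Zariski-dense subgroup contains a $\mu$-contracting sequence for the $K$-invariant measure on the full flag variety) with the relative-commutant theorem, exactly as you do, with injectivity coming from the stabilizer of a flag being contained in a conjugate of the solvable group $B_d$. The only divergence is that you prove the Zariski density of infinite commensurated subgroups directly via a normality/Borel-density argument where the paper outsources this to the cited lemma; your argument for that step is sound.
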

\begin{proof} Follows from Theorem~\ref{tracialvna} and \cite[Lemma~7.5]{BaFu}.
\end{proof}
A stronger result holds if we assume $d=3$ in the above Proposition.
\begin{prop} Let $\Gamma$ be an infinite subgroup of $\SL(3,\mathbb{Z})$. Then $L\Gamma' \cap L\SL(3,\mathbb{Z}) \subset LB \cap \SL(3,\mathbb{Z})$ for some Borel subgroup $B \subset \SL(3,\mathbb{R})$. In particular, the relative commutant of every infinite subgroup of $\SL(3,\mathbb{Z})$ is injective.
\end{prop}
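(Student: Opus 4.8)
The plan is to bootstrap from Proposition~\ref{propa}, which already gives $\mathcal{M}:=L\Gamma'\cap L\SL(3,\mathbb{Z})\subseteq L(\SL(3,\mathbb{Z})\cap P)$ for some parabolic subgroup $P\subseteq\SL(3,\mathbb{R})$, and then to refine $P$ to a Borel subgroup using the extra fact that $\mathcal{M}$ is invariant under conjugation by $\lambda_\gamma$ for every $\gamma\in\Gamma$ (because $\Gamma\subseteq\SL(3,\mathbb{Z})$ and $\mathcal{M}$ commutes with $L\Gamma$). Up to conjugacy the proper parabolic subgroups of $\SL(3,\mathbb{R})$ are the stabilizer $P_\ell$ of a line $\ell$, the stabilizer $P_V$ of a plane $V$, and the Borel subgroups $B_{\ell\subset V}=P_\ell\cap P_V$; moreover $P$ cannot be all of $\SL(3,\mathbb{R})$, since the $KAK$-analysis in the proof of Proposition~\ref{propa} would then force all singular-value ratios of a diverging sequence in $\Gamma$ to stay bounded, hence the sequence to be bounded. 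Applying, if necessary, the duality automorphism $g\mapsto(g^{-1})^{T}$ of $\SL(3,\mathbb{Z})$ (which interchanges the two classes of maximal parabolics), we may assume $P=P_\ell$; if $P$ is already a Borel we are done.

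First I would treat the case $\Gamma\not\subseteq P_\ell$. Pick $\gamma_0\in\Gamma$ with $\gamma_0\ell\neq\ell$. From $\mathcal{M}=\lambda_{\gamma_0}\mathcal{M}\lambda_{\gamma_0}^{*}$ and $\lambda_{\gamma_0}L(\SL(3,\mathbb{Z})\cap P_\ell)\lambda_{\gamma_0}^{*}=L(\SL(3,\mathbb{Z})\cap P_{\gamma_0\ell})$ one obtains $\mathcal{M}\subseteq L(\SL(3,\mathbb{Z})\cap P_\ell)\cap L(\SL(3,\mathbb{Z})\cap P_{\gamma_0\ell})=L(\SL(3,\mathbb{Z})\cap P_\ell\cap P_{\gamma_0\ell})$, using $L(H_1)\cap L(H_2)=L(H_1\cap H_2)$ for subgroups (compare Fourier supports). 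Since $P_\ell\cap P_{\gamma_0\ell}$ stabilizes the two distinct lines $\ell,\gamma_0\ell$, it stabilizes the plane $W:=\ell+\gamma_0\ell$ and hence the full flag $\ell\subset W$; so $P_\ell\cap P_{\gamma_0\ell}\subseteq B_{\ell\subset W}$ and we take $B=B_{\ell\subset W}$.

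Next, the case $\Gamma\subseteq P_\ell$: here $\Gamma$ acts on $\mathbb{R}^3/\ell$ through the Levi projection $\rho\colon P_\ell\to GL(\mathbb{R}^3/\ell)$, and the idea is to manufacture a second $\mu$-contracting sequence for $\SL(3,\mathbb{Z})\curvearrowright\check{\mathbb{P}}^2$ (planes in $\mathbb{R}^3$) whose limiting plane $V_0$ contains $\ell$; then \thref{relativecommutantforgroupvna} gives $\mathcal{M}\subseteq L(\SL(3,\mathbb{Z})\cap P_{V_0})$, and combined with $\mathcal{M}\subseteq L(\SL(3,\mathbb{Z})\cap P_\ell)$ this yields $\mathcal{M}\subseteq L(\SL(3,\mathbb{Z})\cap B_{\ell\subset V_0})$. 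Concretely, take a diverging $(\gamma_n)$ in $\Gamma$, write it in block-upper-triangular form with blocks of sizes $1$ and $2$ and lower block $M_n=\rho(\gamma_n)$, note $|\det M_n|=1$, pass to a subsequence along which the Cartan parts of $M_n$ and the off-diagonal blocks stabilize in direction, and compute the action on planes via the dual representation: for almost every plane $\ker\varphi$ with $\varphi|_\ell\neq 0$ the image $\gamma_n\cdot\ker\varphi$ converges to a single plane $V_0$ whose defining functional annihilates $\ell$. This covers the subcase where $\rho(\Gamma)$ is finite (the divergence then lives in the abelian unipotent radical and contracts $\check{\mathbb{P}}^2$ directly) and the subcase where $\rho(\Gamma)$ is unbounded in $\mathrm{PGL}(\mathbb{R}^3/\ell)$. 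For the last assertion: a Borel subgroup of $\SL(3,\mathbb{R})$ is solvable, so $B\cap\SL(3,\mathbb{Z})$ is amenable and $L(B\cap\SL(3,\mathbb{Z}))$ is injective; as $\mathcal{M}$ is a tracial von Neumann subalgebra of it — hence the range of a trace-preserving conditional expectation — $\mathcal{M}$ is injective.

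The main obstacle is the remaining subcase of $\Gamma\subseteq P_\ell$: when $\rho(\Gamma)$ is infinite but has relatively compact image in $\mathrm{PGL}(\mathbb{R}^3/\ell)$ — e.g.\ when $\Gamma$ is virtually generated by an element whose two non-real eigenvalues have equal modulus, so that $\rho(\Gamma)$ is conjugate into a rotation group. Then $\ell$ is necessarily an irrational line, the induced action on $\mathbb{P}(\mathbb{R}^3/\ell)$ admits no contracting sequence, and no second proper parabolic is visible dynamically. Here one should argue directly that, by Galois conjugation of the eigenlines, every element of $\SL(3,\mathbb{Z})$ stabilizing $\ell$ commutes with a fixed regular semisimple element of $\Gamma$, so that $\SL(3,\mathbb{Z})\cap P_\ell$ equals that centralizer and is abelian; thus $\mathcal{M}$ already sits inside the von Neumann algebra of an amenable subgroup, which suffices for the injectivity conclusion. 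Pinning down this case, and checking the edge cases in the Cartan bookkeeping of the previous step, is where the argument is most delicate.
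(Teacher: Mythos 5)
Your strategy is genuinely different from the paper's, which disposes of the statement in one step: it quotes \cite[Example~7]{Oz} for the fact that every infinite subgroup of $\SL(3,\mathbb{Z})$ contains an element whose singular values are pairwise distinct, and then reruns the $KAK$/partition analysis from the proof of Proposition~\ref{propa} so that the partition of $\{1,2,3\}$ degenerates to singletons and the resulting parabolic is already a Borel. You instead bootstrap from the conclusion of Proposition~\ref{propa} for an arbitrary parabolic, exploiting the invariance of $\mathcal{M}=L\Gamma'\cap L\SL(3,\mathbb{Z})$ under conjugation by $\lambda_{\gamma}$, $\gamma\in\Gamma$, together with $L(H_1)\cap L(H_2)=L(H_1\cap H_2)$, to intersect parabolics. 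Your first case ($\Gamma\not\subseteq P_\ell$) is clean and correct, as is the reduction to $P=P_\ell$ via the contragredient automorphism.

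The gap you flag in the case $\Gamma\subseteq P_\ell$ with $\rho(\Gamma)$ infinite but bounded modulo scalars is real, and your proposed patch does not close it: exhibiting an abelian (hence amenable) overgroup yields only the ``in particular'' clause, not the asserted containment in $L(B\cap\SL(3,\mathbb{Z}))$. In fact this case appears to be an obstruction to the first assertion itself, not merely to your method. Take $g\in\SL(3,\mathbb{Z})$ the companion matrix of $x^{3}-x-1$ and $\Gamma=\langle g\rangle$: the eigenvalues of $g$ are one real number $\rho>1$ and a complex-conjugate pair of modulus $\rho^{-1/2}$, so the second and third singular values of $g^{n}$ remain comparable for all $n$ and no diverging sequence in $\Gamma$ contracts the full flag variety; moreover $\Gamma$ is abelian, so $\lambda_{g}\in L\Gamma'\cap L\SL(3,\mathbb{Z})$, while $g$ has non-real eigenvalues and hence lies in no Borel subgroup of $\SL(3,\mathbb{R})$. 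Note that this $g$ does have pairwise distinct singular values (they are $\tfrac{1+\sqrt{5}}{2}$, $1$, $\tfrac{\sqrt{5}-1}{2}$), which shows that the hypothesis supplied by \cite[Example~7]{Oz} does not by itself force the singular-value gaps of the powers to diverge --- the property the argument of Proposition~\ref{propa} actually needs. So your instinct about where the difficulty sits is sound; what survives in general is the injectivity conclusion, which follows either from your centralizer argument or from the containment $\mathcal{M}\subseteq L(\SL(3,\mathbb{Z})\cap P_\ell)$, the latter group being contained in the centralizer of $g$ (a virtually cyclic abelian group) by a Galois-conjugation argument of the kind you sketch.
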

\begin{proof} It follows from the discussion in \cite[Example~7]{Oz} that every infinite subgroup of $\SL(3,\mathbb{Z})$ contains an element whose singular values are pairwise distinct. The result follows arguing as in the proof of Proposition \ref{propa}. \end{proof}

\begin{example}(\cite{boutonnet2021properly} Corollary 6.4) Let $\Lambda$ be an infinite subgroup of $P$, where $P$ is the parabolic subgroup associated to the partition $\{\{1,2\},\{3\}\}$, such that for every element $g \in \Lambda$, $1$ is a singular value of $g$. Then the relative commutant of every diffuse subalgebra of $L \Lambda$ inside $L\SL(3,\mathbb{Z})$ is injective. Indeed, by the proof of Proposition \ref{propa}, every divergent sequence in $\Lambda$ is $\mu$-contracting towards $eP$. It follows from Lemma \ref{lemb} and Theorem \ref{tracialvna} that the relative commutant of every diffuse von Neumann subalgebra of $L\Lambda$ is contained in $LP$, hence it coincides with its relative commutant inside $LP$. But $LP$ is solid, and the result follows. Note that if $\Lambda$ is contained in $\mathbb{Z}^2$ (after identifying $P$ with $\SL(2,\mathbb{Z}) \rtimes \mathbb{Z}^2$), the result follows from another application of Theorem \ref{tracialvna}.\end{example}
We can give a more general example in the case when $d
\ge 2$.
\begin{example} If $d \geq 2$ and $\Lambda$ is an infinite subgroup of $\SL(d,\mathbb{Z})$ with the property that for every $g \in \Lambda$, $g$ has only two singular values which are not $1$, then the relative commutant of every diffuse subalgebra of $L\Lambda$ is contained in the von Neumann algebra of the parabolic subgroup associated to the partition $\{\{1,...,d-1\}, \{d\}\}$. This is for example the case for certain embeddings of $\SL(2,\mathbb{Z})$ inside $\SL(d,\mathbb{Z})$ for $d \geq 2$.
\end{example}
\bibliographystyle{amsplain}
\bibliography{main}
\end{document}